\newtheorem{thm}{Theorem}
\newtheorem*{thm*}{Theorem}
\newtheorem{lem}{Lemma}[section]
\newtheorem*{cor*}{Corollary}
\newtheorem{prop}[lem]{Proposition}
\theoremstyle{definition}
\theoremstyle{remark}
\newtheorem{rem}{Remark}[section]
\numberwithin{equation}{section}
\newcommand{\norm}[1]{\left\Vert#1\right\Vert}
\newcommand{\vphi}{{\varphi}}
\newcommand{\G}{\Gamma}
\newcommand{\DL}{\mathrm{D}_{\mathrm{L}}}
\newcommand{\dl}{\mathrm{d}_{\mathrm{L}}}
\newcommand{\calF}{\mathcal{F}}
\newcommand{\calO}{\mathcal{O}}
\newcommand{\bbZ}{\mathbb{Z}}
\newcommand{\bbQ}{\mathbb{Q}}
\newcommand{\bbR}{\mathbb R}
\newcommand{\bbC}{\mathbb C}
\newcommand{\bbN}{\mathbb N}
\newcommand{\fraka}{\mathfrak{a}}
\newcommand{\frakg}{\mathfrak{g}}
\newcommand{\frakk}{\mathfrak{k}}
\newcommand{\frakn}{\mathfrak{n}}
\newcommand{\m}{\mathrm{m}}
\newcommand{\SL}{ \mathrm{SL}}
\newcommand{\SO}{ \mathrm{SO}}
\newcommand{\PSL}{ \mathrm{PSL}}
\newcommand{\vol}{\mathrm{vol}}
\newcommand{\lap}{\triangle}
\newcommand{\bs}{\backslash}
\renewcommand{\Im}{\mathrm{Im}}
\newcommand{\Hd}{\mathbb{H}^{d}}
\newcommand{\Ld}{\mathbb{L}^{d}}
\begin{document}
\title[Poles and length spectrum]
{On distribution of poles of Eisenstein series and the length spectrum of hyperbolic manifolds}%
\author{Dubi Kelmer}%
\address{Department of Mathematics, 301 Carney Hall, Boston College
Chestnut Hill, MA 02467}
\email{dubi.kelmer@bc.edu}

\thanks{}%
\subjclass{}%
\keywords{}%

\date{\today}%
\dedicatory{}%
\commby{}%

\maketitle

\begin{abstract}
We extend results of Bhagwat and Rajan on a strong multiplicity one property for length spectrum to hyperbolic manifolds with cusps, showing that for two even dimensional hyperbolic manifolds of finite volume, if all but finitely many closed geodesics have the same length, then all closed geodesics have the same length. When the set of exceptional lengths is infinite, but sufficiently sparse, we can show that the two manifolds must have the same volume, and in low dimensions also the same number of cusps. A main ingredient in our proof is a generalization of a result of Selberg on the distribution of poles of Eisenstein series to hyperbolic manifolds.
\end{abstract}

\section*{Introduction}
The length spectrum of a hyperbolic manifold is the set of lengths of primitive closed geodesics listed with their multiplicities. It is an interesting question how much of the geometry of the manifold can be extracted from (partial) information on the length spectrum.
For compact hyperbolic surfaces a classical result of Huber \cite{Huber59}, using the trace formula, states that the length spectrum and the Laplace spectrum determine each other, as well as the area of the surface. This result was extended to noncompact finite area hyperbolic surfaces by M{\"u}ller \cite{Muller92}, where one needs to consider the Laplace spectrum together with the residual spectrum coming from poles of the Eisenstein series.

In higher dimensions the situation is more complicated since the geometric side of the trace formula depends on the complex length spectrum (i.e. lengths and holonomy) and not just lengths.  Nevertheless, using the analytic continuation of the Ruelle Zeta function, Bhagwat and Rajan \cite{BhagwatRajan11} showed that if two compact even dimensional hyperbolic manifolds have the same multiplicities for all but possibly finitely many exceptional lengths, then they must have the same length spectrum. In \cite{Kelmer12Spectra}, we refined their result and showed that one can allow an infinite, but sparse, set of possible exceptional lengths. Moreover, we showed that this data determines the Laplace spectrum and volume of the manifold (the question of whether the Laplace spectrum determines the length spectrum remains open). The main ingredient used in \cite{Kelmer12Spectra} was a more general version of the trace formula having the length spectrum appear directly on its geometric side obtained by combining different trace formulas corresponding to several representations of the Holonomy group.

The purpose of this note is to extend these results to hyperbolic manifolds with cusps. Instead of working with the trace formula directly as in \cite{Kelmer12Spectra}, we adopt the approach of \cite{BhagwatRajan11} and use the Ruelle Zeta function. Applying the results of Gon and Park on Selberg Zeta functions \cite{GonPark08,GonPark10} we can extend the result of \cite{BhagwatRajan11} to this setting. However, the refinement in \cite{Kelmer12Spectra} allowing an infinite exceptional set is more problematic. In this setting we only obtain a partial result showing that if the exceptional set is sufficiently sparse the two manifolds must have the same discrete Laplace spectrum and the same volume. In low dimensions we can also deduce that they have the same number of cusps. Even these partial results already require new results regarding the distribution of poles of Eisenstein series, which are of independent interest. To describe our results in more detail we need to introduce some notation.

Let $G\cong \SO_0(d,1)$ denote the group of isometries of hyperbolic $d$-space, $\Hd$.
Any finite volume hyperbolic manifold is of the form $X_\G=\G\bs \Hd$ where $\G<G$ is a torsion free lattice.
Given a hyperbolic manifold $X_\G$, for every $\ell\in (0,\infty)$ we denote by $\m_{\Gamma}(\ell)$ the number of primitive (i.e., wrapping once around) closed geodesics of length $\ell$ in $X_\Gamma$. For any two lattices $\Gamma_1,\Gamma_2< G$ let
\begin{equation}\label{e:Dls}\DL(\Gamma_1,\Gamma_2;T)=\sum_{\ell\leq T}|\m_{\Gamma_1}(\ell)-\m_{\Gamma_2}(\ell)|,\end{equation}
and
\begin{equation}\label{e:dls}\dl(\Gamma_1,\Gamma_2)=\limsup_{T\to\infty}\frac{\log(\DL(\Gamma_1,\Gamma_2;T))}{T}.\end{equation}
One can think of $\dl(\Gamma_1,\Gamma_2)$ as measuring the scaled density of the exceptional set of lengths having different multiplicities in the two manifolds, in particular, if this exceptional set is finite then $\dl(\Gamma_1,\Gamma_2)=0$. The result of \cite{Kelmer12Spectra} states that for two compact even dimensional hyperbolic manifolds, the condition that $d_L(\Gamma_1,\Gamma_2)<\frac{1}{2}$ already implies that the two manifolds have the same length spectrum. Moreover, it was shown there that (in any dimension) the weaker condition that $\mathrm{d}_L(\Gamma_1,\Gamma_2)<\frac{d-1}{2}$ implies that the two manifolds have the same Laplace spectrum (and hence, by Weyl's law, also the same volume).

Our first result extends \cite{BhagwatRajan11} to finite volume non-compact hyperbolic manifolds.
Since we rely on results of \cite{GonPark10} we need to impose a certain technical condition on the cusps of $X_\G$.
We say that $X_\G$ has neat cusps if for any parabolic subgroup $P<G$ with unipotent radical $N<P$ we have
$\G\cap P=\G\cap N$.
\begin{thm}\label{t:1}
Let $X_{\Gamma_1},X_{\Gamma_2}$ denote two even dimensional hyperbolic manifolds of finite volume with neat cusps. If $m_{\Gamma_1}(\ell)=m_{\Gamma_2}(\ell)$ for all $\ell\in \bbR$ except perhaps some finite exceptional set, then $m_{\Gamma_1}(\ell)=m_{\Gamma_2}(\ell)$ for all $\ell$.
\end{thm}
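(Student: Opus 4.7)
The plan is to adapt the strategy of Bhagwat-Rajan \cite{BhagwatRajan11} via the Ruelle zeta function, substituting the Gon-Park meromorphic continuation for finite-volume hyperbolic manifolds with neat cusps in place of the compact version. For each $\G_i$ set
\begin{equation*}
R_{\G_i}(s) \,=\, \prod_{\ell>0}(1-e^{-s\ell})^{m_{\G_i}(\ell)},
\end{equation*}
absolutely convergent for $\Re(s)$ large. Under the hypothesis the quotient $Q(s):=R_{\G_1}(s)/R_{\G_2}(s)$ collapses to the finite product
\begin{equation*}
Q(s) \,=\, \prod_{\ell\in E}(1-e^{-s\ell})^{n_\ell},\qquad n_\ell := m_{\G_1}(\ell)-m_{\G_2}(\ell),
\end{equation*}
over the finite exceptional set $E$, and the task is to force every $n_\ell$ to vanish.

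The first step is to express each $R_{\G_i}$ as an alternating product of twisted Selberg zeta functions $Z_{\G_i,\tau_p}$ associated with exterior-power representations of the holonomy group, and then invoke Gon-Park \cite{GonPark08,GonPark10} to meromorphically continue each $Z_{\G_i,\tau_p}$ to $\bbC$. When $d$ is even, their analysis (where the neat cusps hypothesis is essential in order to control the Eisenstein/scattering contributions) places all zeros and poles of $Z_{\G_i,\tau_p}$ on $\bbR\cup\{\Re(s)=(d-1)/2\}$: discrete-spectrum zeros lie on the critical line and possibly at finitely many real points coming from exceptional eigenvalues, trivial zeros lie on the real axis, and the scattering contributions from the cusps---the genuinely new ingredient in the non-compact setting---are again confined to these two loci. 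Hence the zeros and poles of $R_{\G_i}$, and consequently those of $Q$, all lie in $\bbR\cup\{\Re(s)=(d-1)/2\}$.

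The second step is a purely formal contradiction. From the product formula, every zero or pole of $Q$ lies on the imaginary axis, at a point of the form $s=2\pi ik/\ell$ with $\ell\in E$, $k\in\Z$. Moreover $Q$ is periodic in $s$ with period $2\pi i/\ell$ for each $\ell\in E$, so any zero or pole propagates to an infinite progression along the imaginary axis. Since the imaginary axis meets $\bbR\cup\{\Re(s)=(d-1)/2\}$ only at the origin (using that $d\ge 2$ is even, so $(d-1)/2\ne 0$), any such zero or pole would be forced to sit at the origin, yet by periodicity it would then produce further zeros or poles at $2\pi ik/\ell\ne 0$ off the allowed loci, a contradiction. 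Therefore $Q$ has no zeros or poles on the imaginary axis, and so, by its factored form, no zeros or poles at all. Expanding the identity $\sum_{\ell\in E}n_\ell\log(1-e^{-s\ell})\equiv 0$ as a Dirichlet series in $e^{-s}$ and solving inductively, starting from the smallest element of $E$, then yields $n_\ell=0$ for every $\ell\in E$.

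The main obstacle is the first step: verifying that in the non-compact setting the scattering contributions really stay confined to the real axis and critical line. This is precisely where Gon-Park's theorems are brought to bear and where the neat cusps hypothesis is used. Once that localization is in hand, the geometric contradiction from the second step is essentially formal and follows the template of \cite{BhagwatRajan11}.
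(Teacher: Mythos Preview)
Your proof has a genuine gap at precisely the point you flag as the main obstacle. The claim that Gon--Park's continuation confines all zeros and poles of the twisted Selberg zeta functions to $\bbR \cup \{\Re(s) = (d-1)/2\}$ is false in the non-compact case. In addition to the spectral zeros on the critical line and the trivial zeros/poles on the real axis, each $Z_\G(\sigma_k,s)$ acquires \emph{residual zeros} at the poles $\eta \in S_{\G,\sigma_k}$ of the scattering determinant $\vphi_{\G,\sigma_k}$ lying in the half-plane $\Re(s) < (d-1)/2$ (see Proposition~\ref{p:GonPark}). These scattering poles are genuinely complex and are \emph{not} confined to the real axis or the critical line---indeed, their distribution in the open half-plane is the entire content of Theorem~\ref{t:ZeroDist1}. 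After the shifts in the factorization \eqref{e:RF1}, the zeros and poles of $R_{\G_i}(s)$ therefore spread throughout $\Re(s) < (d-1)/2$ and can in particular land on the imaginary axis, so your contradiction---which relies on the imaginary axis meeting the allowed locus only at the origin---does not go through. This is exactly the difference between the compact and non-compact settings; the Bhagwat--Rajan argument does not transfer directly.

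The paper circumvents this by a different mechanism. Rather than attempting to localize the scattering contributions, it works with the variant $Z_\G(s) = \prod_{a\ge 0} R_\G(s+a)$ of \eqref{e:Zeta}, so that the quotient $F(s) = Z_{\G_1}(s)/Z_{\G_2}(s)$ has a \emph{two-parameter} family of zeros or poles at the points $\rho_{a,b} = -a + 2\pi i b/\ell_1$ (for a suitable arithmetic progression of $b$'s), rather than a single progression on the imaginary axis. Via the factorization \eqref{e:Zeta2}, each such $\rho_{a,b}$ with $a$ large enough must come from a residual zero of some $Z_{\G_i}(\sigma_k,\cdot)$, hence from a scattering pole $\eta \in S_{\G_i,\sigma_k}$. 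But M\"uller's result (Proposition~\ref{p:Muller}) asserts that $\sum_{\eta} |\Re(\eta - \tfrac{d-1}{2})|/|\eta - \tfrac{d-1}{2}|^{2}$ converges, whereas the corresponding sum over the lattice of $\rho_{a,b}$'s manifestly diverges. This density contradiction is what replaces the geometric localization argument you attempted.
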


Next, we want to generalize the results of \cite{Kelmer12Spectra} allowing an infinite set of exceptions.
The relation between the length spectrum and discrete Laplace spectrum follows by more or less the same arguments as in the compact case. We note that when the manifold is not compact, the corresponding Weyl law includes both discrete and continuous spectrum, hence, the fact that the two manifolds have the same discrete spectrum no longer implies that they have the same volumes. Nevertheless, we show:
\begin{thm}\label{t:2}
Let $X_{\Gamma_1},X_{\Gamma_2}$ denote two $d$-dimensional hyperbolic manifolds of finite volume with neat cusps.
\begin{enumerate}
\item If $\dl(\Gamma_1,\Gamma_2)<\frac{d-1}{2}$ then the two manifolds have the same discrete Laplace spectrum and the same volume.
\item For $d=2,3$, if $\dl(\Gamma_1,\Gamma_2)<1/4$ then the two manifolds have the same number of cusps.
\end{enumerate}
\end{thm}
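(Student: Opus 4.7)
The plan is to follow the strategy of \cite{Kelmer12Spectra} adapted to the non-compact setting by replacing the Selberg trace formula with the Selberg (or Ruelle) zeta functions $Z_{\Gamma_i}(s)$ constructed in \cite{GonPark08,GonPark10}, and by exploiting the new results on distribution of poles of Eisenstein series announced in the abstract. The starting point is the observation that the hypothesis $\dl(\Gamma_1,\Gamma_2)<\tfrac{d-1}{2}$ together with \eqref{e:Dls}--\eqref{e:dls} and Abel summation implies that the Dirichlet series $\sum_\ell (m_{\Gamma_1}(\ell)-m_{\Gamma_2}(\ell))e^{-s\ell}$ converges absolutely in the half-plane $\mathrm{Re}(s)>\dl(\Gamma_1,\Gamma_2)$. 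From this I would deduce that $\log R(s):=\log (Z_{\Gamma_1}(s)/Z_{\Gamma_2}(s))$, which a priori is holomorphic only in $\mathrm{Re}(s)>d-1$, extends holomorphically into this strictly larger half-plane.

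For part (1) I would then proceed as follows. The discrete Laplace spectrum appears as nontrivial zeros of each $Z_{\Gamma_i}(s)$ lying on and to the right of the critical line $\mathrm{Re}(s)=\tfrac{d-1}{2}$; since this line is strictly to the right of $\mathrm{Re}(s)=\dl$, holomorphy of $\log R$ there forces the zero sets (with multiplicities) to coincide, giving equality of the discrete Laplace spectra. For the volume I would invoke the Gon--Park functional equation, which relates $Z_{\Gamma_i}(s)$ and $Z_{\Gamma_i}(d-1-s)$ by a factor of the form $\exp(P_{\Gamma_i}(s))$ with $P_{\Gamma_i}$ an explicit polynomial of degree $d$ whose leading coefficient is a nonzero multiple of $\vol(X_{\Gamma_i})$. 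This supplies an independent continuation of $R(s)$ to the half-plane $\mathrm{Re}(s)<d-1-\dl$; matching the two continuations and comparing asymptotic growth at infinity then forces $\vol(X_{\Gamma_1})=\vol(X_{\Gamma_2})$.

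For part (2), with $d=2,3$, additional input is needed to recover the number of cusps $\kappa_{\Gamma_i}$, which is not determined by the discrete spectrum alone in the non-compact setting. The main new theorem on distribution of Eisenstein poles enters here: $\kappa_\Gamma$ governs the leading-order asymptotic of the counting function of such poles. The ratio $R(s)$ carries factors coming from the two scattering determinants, whose zeros and poles are exactly the Eisenstein poles, so the sharper hypothesis $\dl<1/4$ should force these counting functions to agree to leading order and hence yield $\kappa_{\Gamma_1}=\kappa_{\Gamma_2}$.

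The main obstacle is disentangling the continuous-spectrum contribution from the length-spectrum contribution in $R(s)$. In the compact case of \cite{Kelmer12Spectra}, Weyl's law recovered the volume from the discrete spectrum alone; here Weyl's law mixes discrete eigenvalues with a scattering winding number, so one must simultaneously control the Eisenstein pole counts---this is precisely where the new distribution result is needed. The restriction to $d=2,3$ in part (2) reflects the effective range in which the available bounds on Eisenstein poles overcome the error terms, and the stricter threshold $1/4$ in place of $\tfrac{d-1}{2}$ is exactly what this Eisenstein argument can afford.
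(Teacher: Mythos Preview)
Your argument for the discrete Laplace spectrum is essentially the paper's: compare the Ruelle zeta functions, use the hypothesis to extend the ratio holomorphically past the critical line, and read off the spectral zeros. Good.

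The gap is in your volume argument. You claim a Gon--Park functional equation of the shape $Z_{\Gamma_i}(d-1-s)=Z_{\Gamma_i}(s)\exp(P_{\Gamma_i}(s))$ with $P_{\Gamma_i}$ a polynomial whose top coefficient is the volume. In the \emph{compact} case this is true, but for manifolds with cusps the functional equation of each $Z_\Gamma(\sigma_k,s)$ (and hence of $R_\Gamma$) picks up an explicit factor involving the scattering determinant $\vphi_{\Gamma,\sigma_k}(s)$ as well as cusp-dependent gamma factors. So ``matching the two continuations and comparing growth'' does not isolate the volume: the unknown scattering pieces of $\Gamma_1$ and $\Gamma_2$ are sitting inside the same comparison. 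The paper therefore takes a different route. It invokes Weyl's law in the form
\[
N_\Gamma(T)+|S_\Gamma(T)|=C_d\,\vol(X_\Gamma)\,T^d+O(T^{d-1/2}),
\]
where $S_\Gamma(T)$ counts scattering poles up to height $T$. Since the discrete spectra agree, the volume difference is controlled by $|S_{\Gamma_1}(T)|-|S_{\Gamma_2}(T)|$. The Ruelle--zeta comparison shows the residual poles of the two lattices agree in a strip $c'<\Re(s)<\tfrac{d-1}{2}$, so this difference equals $|S_{\Gamma_1}(c_1,T)|-|S_{\Gamma_2}(c_1,T)|$ for some fixed $c_1$ in that strip; each of these is then bounded by $O(T\log T)$ using \eqref{e:ZeroDist1}. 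This kills the putative $T^d$ term and forces equal volumes. In short: the pole-distribution theorem is already needed for part~(1), not only for part~(2).

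For part~(2) your outline is correct in spirit, but your stated reason for the restriction $d\le 3$ is not quite the operative one. The threshold $1/4$ and the use of \eqref{e:ZeroDist2} with $\alpha_0=d-\tfrac54$ are indeed what make the Eisenstein bound sharp enough. But the reason the argument breaks for $d\ge 4$ is structural: in the factorization \eqref{e:RF1}--\eqref{e:RF2} of $R_\Gamma(s)$, once $d\ge 4$ the strip to the left of $\tfrac{d-1}{2}$ where you need to compare residual zeros of $Z_\Gamma(\sigma_0,\cdot)$ also contains zeros and poles of the shifted $Z_\Gamma(\sigma_k,\cdot)$ with $k\ge 1$, and these can cancel against the $\sigma_0$ contribution. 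For $d\le 3$ one can choose $c'=\max(c,\tfrac{d-3}{2})$ so that only $Z_\Gamma(\sigma_0,s)$ contributes in $c'<\Re(s)<\tfrac{d-1}{2}$, and then \eqref{e:ZeroDist1} extracts the $\tfrac{\kappa(d-1)}{2\pi}T\log T$ leading term while \eqref{e:ZeroDist2} bounds the far-left remainder by $O(T\log\log T)$.
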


The proof of Theorem \ref{t:2} relies on results on the distribution of poles of Eisenstein series associated to hyperbolic manifolds, generalizing previous results of Selberg \cite{Selberg90} for hyperbolic surfaces. To describe these results we first recall some definitions and facts regarding these Eisenstein series.

Fix an Iwasawa decomposition, $G=NAK$, with $K$ maximal compact, $A$ Cartan, and $N$ unipotent, and let $P=NAM$ be a minimal parabolic where $M=Z_K(A)$ is the centralizer of $A$ in $K$. The cusps of $\Gamma$ are the $\Gamma$-conjugacy classes of minimal parabolic subgroups of $G$ intersecting $\Gamma$ nontrivially. Let $P_1,\ldots, P_\kappa$ denote a full set of representatives for these classes and let $k_i\in K$ such that $P_i=k_iPk_i^{-1}$. For each cusp $P_i=N_iA_iM_i$, let $\Gamma_{P_i}=\Gamma\cap P_i$ and $\Gamma_{N_i}=\Gamma\cap N_i$. The spherical Eisenstein series corresponding to the $i$'th cusp is the function on the upper half space
defined for $\Re(s)>d-1$ by the convergent series
\begin{equation}\label{e:EisensteinSph}E_i(s,z)=\sum_{\gamma\in \Gamma_{\!P_i}\bs \Gamma} y_i(\gamma.z)^s,
\end{equation}
where we use the coordinates $z=(x,y)\in \bbR^{d-1}\times \bbR^+$ for the upper half space and set $y_i(z)=y(k_i^{-1}.z)$. The constant term of $E_i(s,z)$ with respect to the $j$'th cusp is defined by
\begin{equation}\label{e:ConstSph} E_{ij}(s,z)=\frac{1}{v_j}\int_{\Gamma_{N_j}\bs N_j}E_i(s,n.z)dn,\end{equation}
where $v_j=\vol(\Gamma_{N_j}\bs N_j)$, and $dn$ is Haar measure on $N_j$. These constant terms satisfy
\begin{equation}\label{e:ConstSph1}
E_{ij}(s,z)=\delta_{ij}y_j(z)^s+\phi_{ij}(s)y_j(z)^{d-1-s},
\end{equation}
with $\phi_{ij}(s)$  the coefficients of the scattering matrix.

The Eisenstein series $E_j(s,z)$, the scattering matrix $\phi(s)=(\phi_{ij}(s))$, and its determinant $\vphi(s)=\det(\phi(s))$ (a priori defined for $\Re(s)>d-1$) have a meromorphic extension to the complex plane and satisfy the functional equation $\phi(s)\phi(d-1-s)=I$. The poles of $\vphi(s)$ which are also the poles of the Eisenstein series, are all in the half plane $\Re(s)<\tfrac{d-1}{2}$ except for at most finitely many poles in the interval $(\tfrac{d-1}{2},d-1]$. From the functional equation $\vphi(s)\vphi(d-1-s)=1$, we can understand the distribution of these poles by looking at the zeroes of $\vphi(s)$ in the half plane $\Re(s)>\tfrac{d-1}{2}$. For these we show the following.
\begin{thm}\label{t:ZeroDist1}
The zeroes of the scattering determinant, $\vphi(s)$, in the half plane $\Re(s)>\tfrac{d-1}{2}$ are all located in some vertical strip, and writing these zeroes as $\rho=\beta+i\gamma$ (with multiplicities) we have:
\begin{enumerate}
\item There is a constant $A_\G$ such that
\begin{equation}\label{e:ZeroDist1}
\mathop{\sum_{|\gamma|<T}}_{\beta>\tfrac{d-1}{2}}(\beta-\tfrac{d-1}{2})=\frac{\kappa(d-1)}{2\pi} T\log(T)+A_\Gamma T+ O(\log(T))
\end{equation}
\item
For any $\alpha\geq \alpha_0=d-\tfrac{5}{4}$
\begin{equation}\label{e:ZeroDist2}
\mathop{\sum_{|\gamma|<T}}_{\beta>\alpha}(\beta-\alpha)\ll T\min\{\log(\tfrac{1}{(\alpha-\alpha_0)}),\log\log T\}
\end{equation}
\end{enumerate}
\end{thm}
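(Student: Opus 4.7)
My plan is to apply Littlewood's contour lemma to $\varphi(s)$ in a rectangle whose left edge sits on the critical line $\Re(s)=\tfrac{d-1}{2}$, using an explicit factorization of $\varphi$ into an archimedean Gamma factor and a Dirichlet series convergent in a right half-plane.

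First I would set up the factorization. The standard computation of the constant term of each Eisenstein series $E_i(s,z)$ on $\Hd$, via the Bruhat decomposition of $\G_{P_i}\bs\G$ and unfolding over $\G_{N_j}$, produces an identity
$$\phi_{ij}(s)=\pi^{(d-1)/2}\,\frac{\Gamma(s-\tfrac{d-1}{2})}{\Gamma(s)}\,D_{ij}(s),$$
where $D_{ij}(s)$ is a Dirichlet-type series absolutely convergent, hence bounded and nonvanishing, for $\Re(s)>d-1$. Taking the determinant pulls out $\kappa$ copies of the Gamma quotient; since neither factor vanishes for $\Re(s)>d-1$, all zeros of $\varphi$ with $\beta>\tfrac{d-1}{2}$ are trapped in the strip $\tfrac{d-1}{2}<\Re(s)\leq d-1$, proving the vertical-strip claim.

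For part~(1), I would apply Littlewood's lemma to $\varphi$ on the rectangle $[\tfrac{d-1}{2},\sigma_0]\times[-T,T]$ for fixed $\sigma_0>d-1$. The left integral vanishes, because the functional equation $\varphi(s)\varphi(d-1-s)=1$ combined with $\overline{\varphi(s)}=\varphi(\bar s)$ forces $|\varphi|\equiv 1$ on the critical line. On the right boundary Stirling gives
$$\log\bigl|\Gamma(\sigma_0-\tfrac{d-1}{2}+it)/\Gamma(\sigma_0+it)\bigr|=-\tfrac{d-1}{2}\log|t|+O(1),$$
while absolute convergence gives $\log|\det D_{ij}(\sigma_0+it)|=O(1)$. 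Integrating $-\log|\varphi(\sigma_0+it)|$ over $[-T,T]$ therefore produces the main contribution $\kappa(d-1)(T\log T-T)+O(T)$, yielding $\tfrac{\kappa(d-1)}{2\pi}T\log T+A_\G T$ after division by $2\pi$. The two horizontal argument integrals in Littlewood's identity are controlled by $O(\log T)$ through the standard Jensen-on-a-disc argument, which also shows that $\varphi$ has $O(\log T)$ zeros in any unit ordinate interval near height $T$.

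For part~(2), I would instead apply Jensen's formula on concentric discs of radii $r<r'$ centered at points $\alpha_0+it_0$ on the line $\Re(s)=\alpha_0$, comparing the count of zeros in the smaller disc to the boundary integral on the larger one. Since the Gamma quotient contributes the same $-\tfrac{d-1}{2}\log|t|+O(1)$ on every vertical line, what really controls the count is the size of $\log|\det D_{ij}|$ on $\Re(s)=\alpha_0$, which I would bound via Phragm\'en--Lindel\"of interpolation between the critical line (where $\det D_{ij}$ must grow like $\tfrac{d-1}{2}\log|t|$ to cancel the Gamma factor) and $\Re(s)=d-1+\eps$ (where it is $O(1)$). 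The Jensen ratio $\log(r'/r)$ then produces the $\log(1/(\alpha-\alpha_0))$ factor by shrinking $r\sim\alpha-\alpha_0$, and the alternative bound $\log\log T$ by choosing $r'$ of order $\log T$. The main obstacle I foresee is pinning down the precise value $\alpha_0=d-\tfrac54$: this pinpoints a specific sub-convex estimate on $\det D_{ij}$ strictly better than pure convexity, and tracking the exact constants through the Jensen step will be the most delicate part of the argument.
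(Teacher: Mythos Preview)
Your setup and part~(1) are essentially the paper's approach: factor out the $\kappa$-fold Gamma quotient, use $|\varphi|=1$ on the critical line, and feed this into a Littlewood-type identity together with an $O(\log T)$ bound on the horizontal argument integrals. The paper works with the Fej\'er-weighted version $\sum(T-|\gamma|)(\beta-\alpha)$ and then takes a difference in $T$, but this is a cosmetic variation of what you wrote.

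Part~(2), however, has a real gap. You correctly identify that the value $\alpha_0=d-\tfrac54$ encodes something strictly sharper than convexity, but Phragm\'en--Lindel\"of will never produce it: interpolating the bound $|t|^{(d-1)/2}$ on the critical line against $O(1)$ at $\Re(s)=d-1$ gives each $L_{ij}(\alpha+it)\ll|t|^{d-1-\alpha}$, which says nothing special at $\alpha=d-\tfrac54$. The paper does not use a pointwise subconvex bound at all. Instead it proves a \emph{mean-square} estimate
\[
\frac{1}{T}\int_1^T|L_{ij}(\alpha+it)|^2\,dt \;\ll\; \min\Bigl\{\tfrac{1}{(\alpha-\alpha_0)^2},\,\log^2 T\Bigr\},\qquad \alpha\ge\alpha_0,
\]
obtained by approximating $L_{ij}$ by a finite Dirichlet polynomial and then computing the mean square directly. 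The threshold $\alpha_0=d-\tfrac54$ falls out of the error term $O(x^{1-1/(2r)})$ in the partial-sum asymptotic $\sum_{\lambda_n\le x}a_{ij}(n)\sim cx$, and this asymptotic (as well as the reduction of the mean square to short diagonal sums) depends essentially on the \emph{positivity} of the coefficients $a_{ij}(n)$. So the ingredient you are missing is not a subconvexity bound for $\det D_{ij}$ but rather this positivity-driven second-moment estimate for the individual entries.

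The packaging of part~(2) is also different from what you sketched. The paper does not use Jensen on discs; it applies Littlewood's formula on the rectangle with left edge $\Re(s)=\alpha$, reducing to a bound on $\tfrac{1}{T}\int_{-T}^T\log|L^*(\alpha+it)|\,dt$. This integral is then controlled by writing $|\det(L_{ij})|\le\bigl(\tfrac1\kappa\sum_{i,j}|L_{ij}|^2\bigr)^{\kappa/2}$ and using concavity of $\log$ to pass to $\log$ of the averaged second moment above. Your disc-based scheme with $r\sim\alpha-\alpha_0$ does not obviously assemble into the half-plane weighted sum $\sum_{\beta>\alpha}(\beta-\alpha)$, and in any case would still need the same second-moment input.
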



\begin{rem}
Using the relation between the zeroes and poles one can interpret this result as saying that a hundred percent of the poles $\tilde{\rho}=\tilde{\beta}+i\gamma$ of $\vphi(s)$ in the half plane $\Re(s)<\tfrac{d-1}{2}$ are concentrated in the strip $\tfrac{1}{4}\leq \Re(s)<\tfrac{d-1}{2}$, in the sense that, for any $\alpha<\tfrac{1}{4}$,  as $T\to\infty$
$$\mathop{\sum_{|\gamma|<T}}_{\alpha<\tilde{\beta} <\tfrac{d-1}{2}}\!\!\!(\tfrac{d-1}{2}-\tilde{\beta})\sim \mathop{\sum_{|\gamma|<T}}_{\tilde{\beta} <\tfrac{d-1}{2}}(\tfrac{d-1}{2}-\tilde{\beta}).$$
\end{rem}

\begin{rem} For hyperbolic surfaces this result is due to Selberg \cite{Selberg90} and the value of $\alpha_0=\tfrac{3}{4}$ is best possible. Indeed, for  $\Gamma=\SL_2(\bbZ)$ the scattering determinant can be computed explicitly in terms of the Riemann Zeta function and its poles are located at the zeroes of $\zeta(1-2s)$, hence, a positive proportion\footnote{The Riemann hypothesis implies that all the zeroes are on that line.} are on the line $\Re(s)=\tfrac{1}{4}$. For $3$-manifolds, when $\Gamma=\SL_2(\calO_K)$ with $\calO_K$ the ring of integers of a quadratic complex number field, the poles of the scattering determinant are at the zeros of the Dedekind Zeta function $\zeta_K(1-s)$ and \eqref{e:ZeroDist2} holds with $\alpha_0=\tfrac{3}{2}$. However, for general $\Gamma< \PSL_2(\bbC)$ our method only gives a weaker result with $\alpha_0=\tfrac{7}{4}$. 
\end{rem}


\section{Zeta functions of Selberg and Ruelle}
\subsection{Selberg Zeta functions}
We recall the correspondence between closed geodesics on $X_\G$ and hyperbolic conjugacy classes in $\G$.
Let $\frakg=\frakn\oplus \fraka\oplus \frakk$ denote the decomposition of the Lie algebra $\frakg$ corresponding to the Iwasawa decomposition $G=NAK$.
Fix once and for all an element $H_0\in \fraka$ such that $\rho(H_0)=\frac{d-1}{2}$, where $\rho$ denotes half the sum of positive roots, and let $a_t=\exp(tH_0)\in A$. Any hyperbolic $\gamma\in \Gamma$ is conjugated in $G$ to an element $m_\gamma a_{\ell_\gamma}\in MA^+$ where $A^+=\{a_t|t>0\}$ and $M$ is the centralizer of $A$ in $K$. The pair $(\ell_\gamma,m_{\gamma})$ is the length and holonomy class of the closed geodesic corresponding to $\gamma$, where $\ell_\gamma$ is uniquely determined by the conjugacy class of $\gamma$ and $m_\gamma$ is determined up to conjugacy in $M$.

The Selberg Zeta function, $Z_\G(\sigma,s)$, corresponding to an irreducible representation $\sigma\in \hat{M}$ is defined on the half plane $\Re(s)>d-1$ by
\begin{eqnarray}\label{e:SZF}\nonumber Z_\G(\sigma,s)
 =\exp\left(-\sum_{\gamma\in \Gamma_h'}\sum_{j=1}^\infty\frac{\overline{\chi_{\sigma}(m_{\gamma^j})}}{j D(\gamma^j)}e^{-(s-\frac{d-1}{2})j\ell_\gamma}\right)
\end{eqnarray}
where $\G_h'$ denotes the set of primitive hyperbolic conjugacy classes 
and
\[D(\gamma)=e^{\frac{d-1}{2}\ell_\gamma}|\det\big(\mathrm{Ad}(m_\gamma a_\gamma)^{-1}-\mathrm{Id})|_\frakn\big)|.\]

Before we present the result of \cite{GonPark10} on the analytic continuation of these Zeta functions we need to recall some more background on Eisenstein series. In addition to the spherical Eisenstein series given in \eqref{e:EisensteinSph}, for any representation class $\sigma\in \hat{M}$ one can define corresponding Eisenstein series, scattering matrix, and scattering determinant. We refer the reader to \cite{Warner79,GonPark10} for the precise definition, and just note here the following general result from \cite[Section 6]{Muller89} regarding the poles of these scattering determinants.
\begin{prop}\label{p:Muller}
Let $\vphi_{\Gamma,\sigma}(s)$ denote the scattering determinant corresponding to $\sigma\in \hat{M}$. Then $\vphi_{\G,\sigma}(s)$ is a meromorphic function with all of its poles in the half plane $\Re(s)<\tfrac{d-1}{2}$ except for finitely many poles in the interval $(\tfrac{d-1}{2},d-1]$. Moreover if we denote by $S_{\G,\sigma}$ the set of poles in $\Re(s)<\tfrac{d-1}{2}$ then
$\sum_{\eta\in S_{\G,\sigma}}\frac{\Re(\eta-\tfrac{d-1}{2})}{|\eta-\tfrac{d-1}{2}|^2}$ converges.
\end{prop}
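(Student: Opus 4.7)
The plan is to follow M\"uller's spectral-theoretic framework and derive the three assertions in sequence. First I would construct, for each cusp $P_i$ and each $\sigma\in\hat{M}$, the $\sigma$-valued Eisenstein series $E_{\G,\sigma,i}(s,\cdot)$ as a principal-series Poincar\'e-type sum convergent for $\Re(s)>d-1$. Computing its constant term along every cusp $P_j$ produces a block of the scattering matrix $\phi_{\G,\sigma}(s)$, and the Langlands--Warner theory of Eisenstein series provides the meromorphic continuation to $\bbC$ together with the functional equation $\phi_{\G,\sigma}(s)\phi_{\G,\sigma}(d-1-s)=I$. Taking determinants gives the meromorphicity of $\vphi_{\G,\sigma}(s)$, the reflection $\vphi_{\G,\sigma}(s)\vphi_{\G,\sigma}(d-1-s)=1$, and the identity $|\vphi_{\G,\sigma}(s)|=1$ on the critical line $\Re(s)=\tfrac{d-1}{2}$, which comes from unitarity of $\phi_{\G,\sigma}$ there.

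For the pole locations in the half-plane $\Re(s)>\tfrac{d-1}{2}$, each such pole $\eta$ is simple, and residuating $E_{\G,\sigma,i}(s,\cdot)$ at $\eta$ yields a square-integrable eigensection of the $\sigma$-twisted Laplacian on $X_\G$, with eigenvalue a fixed affine function of $\eta(d-1-\eta)$. Since this residual spectrum is discrete and contains only finitely many eigenvalues in any bounded window, while the bounded strip $\tfrac{d-1}{2}<\Re(\eta)\leq d-1$ is sent to a bounded eigenvalue range, only finitely many poles can occur there.

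The third assertion, the Blaschke-type convergence, is the substantive step. My approach is to establish that $\vphi_{\G,\sigma}(s)$ is meromorphic of finite order by writing it as a ratio $E(s)/F(s)$ of entire functions of finite order, with $E,F$ assembled from generalized $\Gamma$-factors encoding the Harish-Chandra Plancherel density attached to $\sigma$, together with a Dirichlet-type exponential sum coming from the unipotent stabilizers at each cusp. Under the neat-cusp hypothesis the latter is a genuine Dirichlet series with polynomial growth as $\Re(s)\to+\infty$; combined with the functional equation and a Phragm\'en--Lindel\"of interpolation this yields a uniform finite-order global bound for $\vphi_{\G,\sigma}(s)$. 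The Cayley transform $w=\frac{s-\tfrac{d-1}{2}-1}{s-\tfrac{d-1}{2}+1}$ sends $\Re(s)>\tfrac{d-1}{2}$ onto the open unit disk, and the growth bound together with $|\vphi_{\G,\sigma}(s)|=1$ on the imaginary boundary forces the images $\{w_\eta\}$ of the zeros in the right half-plane (equivalently, of the poles $\eta\in S_{\G,\sigma}$ on the other side) to satisfy the disk Blaschke condition $\sum_\eta(1-|w_\eta|)<\infty$. A direct computation $1-|w_\eta|\asymp \frac{|\Re(\eta-\tfrac{d-1}{2})|}{|\eta-\tfrac{d-1}{2}|^2+1}$ for $\eta$ outside a bounded neighbourhood of $\tfrac{d-1}{2}$ then unwinds this to precisely the stated convergence.

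The main obstacle lies in Step 3: producing the finite-order factorization of $\vphi_{\G,\sigma}$ uniformly in $\sigma\in\hat{M}$ requires careful control of the cuspidal Dirichlet-series factor and of the explicit $\Gamma$-factors dictated by the Plancherel density for $\SO_0(d,1)$, and it is exactly this analytic content that is carried out in detail in \cite[Section 6]{Muller89}.
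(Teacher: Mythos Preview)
The paper does not actually prove this proposition: it is stated without proof as a quotation of a known result, with the attribution ``the following general result from \cite[Section 6]{Muller89}.'' There is therefore no in-paper argument to compare your proposal against.

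Your sketch is a reasonable outline of the spectral-theoretic route to the result, and you yourself correctly identify \cite[Section 6]{Muller89} as the place where the analytic content (finite-order factorization, growth control, Blaschke-type estimate) is carried out. In that sense your proposal and the paper agree: both defer the substance to M\"uller. Two minor comments on the sketch itself. First, your argument for locating the poles in $\Re(s)>\tfrac{d-1}{2}$ shows only that they are finite in number in the strip; to place them on the real interval $(\tfrac{d-1}{2},d-1]$ you also need to invoke self-adjointness of the relevant Laplacian, which forces the eigenvalue $\eta(d-1-\eta)$ to be real and hence $\eta$ real once $\Re(\eta)\neq\tfrac{d-1}{2}$. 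Second, the Blaschke step needs a bit more than ``finite order plus modulus one on the boundary'': one must know that $\vphi_{\G,\sigma}$ (or rather its completion by the $\Gamma$-factors) is of bounded characteristic in the half-plane, which is what the finite-order factorization together with the functional equation supplies. These are exactly the details handled in the cited reference, so your proposal is consistent with the paper's treatment.
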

We can now state the result of Gon and Park \cite[Theorem 4.6]{GonPark10}.
For $k=0,\ldots, [\frac{d-1}{2}]$, let $\sigma_k\in \hat{M}$ correspond to the irreducible representation of $M$ on the space $\bigwedge^k(\bbC^{d-1})$ (when $d=2n+1$ and $k=n$ we denote by $\sigma_n^{\pm}$ the two unramified irreducible representations acting on $\bigwedge^n(\bbC^{2n})$. We also denote by
$\lap_k$ the Laplacian acting on $k$-forms (where $\lap_0=\lap$ is just the hyperbolic Laplacian).
\begin{prop}\label{p:GonPark}
The Selberg Zeta function $Z_\G(\sigma_k,s)$ has a meromorphic continuation to the complex plane. It has poles at the points $s=\frac{d-1}{2}-\ell$, $\ell\in \bbN\cup \{0\}$ (and additional zeroes/poles at negative integers when $d$ is even). It has spectral zeroes at the points $s=\tfrac{d-1}{2}\pm ir$ with $\lambda=r^2+(\tfrac{d-1}{2}-k)^2$ an eigenfunction of $\lap_k$, and residual zeroes at $\eta\in S_{\Gamma,\sigma_k}$. It also has finitely many residual poles in the interval $(\tfrac{d-1}{2},d-1]$ at the poles of $\vphi_{\sigma_k}$. The order of the spectral zeroes equals the multiplicity of the corresponding eigenvalue and the order of the residual poles and zeroes are equals to $\dim(\sigma_k)$ times the order of the corresponding pole of $\vphi_{\sigma_k}$.
\end{prop}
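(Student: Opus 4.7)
The plan is to derive Proposition \ref{p:GonPark} from the Selberg trace formula for the homogeneous vector bundle over $\G\bs\Hd$ associated to the $K$-type induced from $\sigma_k$, following the framework of Gangolli--Warner in the compact case and Bunke--Olbrich in the rank-one non-compact setting. Taking a logarithmic derivative in \eqref{e:SZF} shows that $\frac{Z_\G'}{Z_\G}(\sigma_k,s)$ equals the sum over primitive hyperbolic classes (with their multiples) of $\frac{\ell_\gamma\,\overline{\chi_{\sigma_k}(m_{\gamma^j})}}{D(\gamma^j)}e^{-(s-\frac{d-1}{2})j\ell_\gamma}$. The first step is to choose a $\sigma_k$-isotypic test function $h_s$ on $G$ whose $A$-Fourier transform is the Poisson-type kernel associated with the parameter $s$, so that the hyperbolic orbital integrals on the geometric side of the trace formula reproduce exactly this sum. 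This reduces the proposition to showing that each of the remaining terms in the trace formula extends meromorphically in $s$ with precisely the divisor claimed in the statement.

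On the spectral side, the discrete $L^2$-spectrum of $\lap_k$ contributes, via Plancherel inversion, terms of the shape $\frac{1}{s-\frac{d-1}{2}-ir}+\frac{1}{s-\frac{d-1}{2}+ir}$ with multiplicity equal to that of the eigenvalue $\lambda=r^2+(\tfrac{d-1}{2}-k)^2$, yielding spectral zeros of the asserted order. The continuous-spectrum contribution expands as an integral of $h_s$ against $\Tr(\phi_{\sigma_k}^{-1}\phi_{\sigma_k}')(ir)$; shifting contours and invoking Proposition \ref{p:Muller} identifies its singularities precisely with the poles $\eta\in S_{\G,\sigma_k}$ of $\vphi_{\sigma_k}$ (producing zeros of order $\dim(\sigma_k)$ times the order of the corresponding pole), together with the finitely many poles of $\vphi_{\sigma_k}$ in $(\tfrac{d-1}{2},d-1]$, which become actual poles of $Z_\G(\sigma_k,s)$. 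The identity contribution, proportional to $\vol(\G\bs\Hd)$ times the Plancherel density for $\sigma_k$, accounts for the trivial poles at $s=\tfrac{d-1}{2}-\ell$ for $\ell\in\bbN\cup\{0\}$, while the extra zeros/poles at negative integers in even dimension come from the polynomial factors in the Plancherel measure for $\SO_0(d,1)$, which are polynomial in odd $d$ but acquire trigonometric factors when $d$ is even.

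The final and hardest step is to control the unipotent and weighted contributions. Here the neat-cusps hypothesis is indispensable: the condition $\G\cap P_i=\G\cap N_i$ forces every parabolic element of $\G$ to be purely translational, so the unipotent orbital integrals reduce to Epstein-type zeta functions attached to the lattices $\G\cap N_i\subset N_i$, which are manifestly meromorphic in $s$ and contribute no divisor beyond those already listed. Pairing these with the Maass--Selberg-type defect terms produced by the truncation that is needed to make the non-compact trace formula converge, and verifying that the truncation-dependent pieces cancel up to a meromorphic function regular in $\Re(s)>\tfrac{d-1}{2}$, is the main technical obstacle; without neatness one would face orbital integrals along non-central elements of $P_i$, requiring a substantially more delicate analysis. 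Once every piece is assembled and shown to be meromorphic with the claimed divisors, antidifferentiating $\frac{Z_\G'}{Z_\G}(\sigma_k,s)$ and tracking residues recovers the meromorphic continuation of $Z_\G(\sigma_k,s)$ with the full zero/pole description stated in the proposition.
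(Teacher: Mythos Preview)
Your outline is a reasonable high-level sketch of how one proves such a divisor formula via the twisted Selberg trace formula, and it is broadly in the spirit of what Gon and Park actually do in \cite{GonPark10}. However, you should be aware that the paper does \emph{not} prove this proposition at all: it is simply quoted as \cite[Theorem~4.6]{GonPark10}, with no argument given. The sentence immediately preceding the proposition reads ``We can now state the result of Gon and Park \cite[Theorem 4.6]{GonPark10},'' and the proposition is used as a black box thereafter. So there is nothing to compare your proposal against in the paper itself.

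If your intention is to supply a self-contained proof rather than a citation, a few cautions are in order. Your sketch is plausible but compresses several hundred pages of analysis into three paragraphs, and some details are stated imprecisely. For instance, you attribute the trivial poles at $s=\tfrac{d-1}{2}-\ell$ entirely to the identity contribution, but in the non-compact case the parabolic (unipotent and weighted) terms also contribute to the trivial divisor, and sorting out exactly which integer points carry zeros versus poles of which order requires the explicit Plancherel and scattering computations carried out in \cite{GonPark10}. Your claim that the unipotent terms ``contribute no divisor beyond those already listed'' is therefore too quick. Similarly, the assertion that the continuous-spectrum term yields residual zeros of order exactly $\dim(\sigma_k)$ times the pole order of $\vphi_{\sigma_k}$ requires knowing that the scattering operator for the induced representation decomposes as $\dim(\sigma_k)$ copies of the scalar scattering matrix, which is a nontrivial representation-theoretic fact in this setting. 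Unless you intend to reproduce the full analysis of \cite{GonPark10}, the honest move here is simply to cite the result, as the paper does.
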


\subsection{Ruelle Zeta function}
Information about the length spectrum is captured directly by the Ruelle Zeta function, defined in the half plane $\Re(s)>\frac{d-1}{2}$ by the Euler product
\[R_{\Gamma}(s)=\prod_{\gamma\in \Gamma_h'}(1-e^{-s\ell_\gamma}).\]
As shown in \cite{GonPark10}, when $d$ is even it is related to the Selberg Zeta functions  via
\begin{equation}\label{e:RF1} R_\Gamma(s)=\prod_{k=0}^{d/2-1}\left[\frac{Z_\Gamma(\sigma_k,s+k)}{Z_\Gamma(\sigma_k,s+d-1-k)}\right]^{(-1)^{k+1}},\end{equation}
and when $d$ is odd we have
\begin{equation}\label{e:RF2}R_\Gamma(s)=\prod_{k=0}^{\tfrac{d-1}{2}}\left[ Z_\Gamma(\sigma_k,s+k) Z_\Gamma(\sigma_k,s+d-1-k)\right]^{(-1)^{k+1}},\end{equation}
where in the odd case $d=2n+1$ we denoted by $Z_\Gamma(\sigma_n,s):=Z_\Gamma(\sigma_n^+,s)Z_\Gamma(\sigma_n^-,s)$.

We also consider a variant of the Ruelle Zeta function that is similar to the Selberg Zeta function for surfaces, that is,
\begin{equation}\label{e:Zeta}Z_\Gamma(s)=\prod_{\gamma\in \Gamma_h'}\prod_{a=0}^\infty(1-e^{-(s+a)\ell_\gamma}).\end{equation}
When $d$ is even, a simple manipulation of \eqref{e:RF1} shows that this Zeta function can be expressed as a finite product of Selberg Zeta functions and their inverses as
\begin{equation}\label{e:Zeta2}
Z_\Gamma(s)=\prod_{k=0}^{d/2-1}\left[\prod_{j=k}^{d-2-k}Z_\Gamma(\sigma_k,s+j)\right]^{(-1)^{k+1}}.
\end{equation}

\section{Proof of Theorem 1}
Let $\Gamma_1,\Gamma_2\subset \SO(d,1)$ be two torsion free lattices with $d$ even. Assume that $m_{\Gamma_1}(\ell)=m_{\Gamma_2}(\ell)$ for all $\ell\in \bbR$ except perhaps for a finite exceptional set $\{\ell_1,\ldots, \ell_N\}$.  For these exceptional lengths let $\Delta m(\ell_j)=m_{\Gamma_1}(\ell_j)-m_{\Gamma_2}(\ell_j)$. We will show that $\Delta m(\ell_j)=0$ as well.

To do this consider the quotient $F(s)=\frac{Z_{\Gamma_1}(s)}{Z_{\Gamma_2}(s)}$ of the corresponding Zeta functions given in \eqref{e:Zeta} and note that for $\Re(s)>d-1$ we have
\[F(s)=\prod_{j=1}^N \prod_{a=0}^\infty(1-e^{-(s+a)\ell_j})^{\Delta m(\ell_j)}.\]
This product absolutely and uniformly converges on any compact set away from
$\{s\in \bbC| e^{-(s+a)\ell_j}= 1,\; j=1,\ldots, N,\; a\in \bbN\cup \{0\}\}$. Consequently, $F(s)$ is a meromorphic functions with all its zeros and poles located at the points $\rho_{a,b,j}=-a+\frac{2\pi i b}{\ell_j}$ with $a,b\in \bbZ$, $a\geq 0$. The order of the pole/zero $\rho_{a,b,j}$ does not depend on $a$ and is given by
$$\sum_{\{i|\ell_i b\in \ell_j\bbZ\}}\Delta m(\ell_i).$$
In order to avoid any possible cancelation in this sum, let $\ell_1$ be the largest for which $\Delta m(\ell_1)\neq 0$. If there are other $\ell_i$ with $\frac{\ell_i}{\ell_1}\in\bbQ$ and $\Delta m(\ell_i)\neq 0$ let $q\in \bbN$ denote their least common multiple. Now for all $b\in \bbN$ with $(b,q)=1$ we have that $\ell_i b/\ell_1\in \bbZ$ if and only if $\ell_i=\ell_1$, and hence $F(s)$ has poles/zeroes at all points $\rho_{a,b}=-a+\frac{2\pi i b}{\ell_1}$ with $a\geq 0$ and  $(b,q)=1$, with the same order $\Delta m(\ell_1)\neq 0$.

Next use \eqref{e:Zeta2} to express $F(s)$ as a finite product of quotients of Selberg Zeta functions.
Since $Z_{\Gamma_i}(\sigma_k,s+j)$ have no complex poles in the half plane $\Re(s)<-j$, all of the poles/zeroes of $F(s)$ come from the residual zeroes of $Z_{\Gamma_i}(\sigma_k,s+j)$ with $0\leq k\leq d/2-1$, and $k\leq j\leq d-2-k,\; i=1,2$. This means that for any pair $a,b\in \bbN$ with $a\geq d-1$ and $(b,q)=1$, there is some $j_{a,b}\leq d-1$ such that $\rho_{a-j_{a,b},b}\in S_{\Gamma_i,\sigma_k}$ is a residual zero of $Z_{\Gamma_i}(\sigma_k,s)$, for some $0\leq k\leq d/2-1$ and $i\in\{1,2\}$. We thus get a bound
\[\sum_{a=d}^\infty \mathop{\sum_{b=1}^\infty}_{(b,q)=1}\frac{a-j_{a,b}+\frac{d-1}{2}}{(a-j_{a,b}+\frac{d-1}{2})^2+4\pi^2 b^2/\ell_1^2}\leq \sum_{i=1}^2\sum_{k=0}^{d/2-1}\sum_{\eta\in S_{\Gamma_i,\sigma_k}}\frac{|\Re(\eta-\frac{d-1}{2})|}{|\eta-\frac{d-1}{2}|^2},\]
noting that all terms are positive and every summand on the left also appears on the right.
However, the sum on the right converges by Proposition \ref{p:Muller} while the sum on the left clearly diverges, implying that $\Delta m(\ell_1)= 0$ as claimed.

\section{Proof of Theorem 2}
To prove Theorem 2 consider the quotient of the Ruelle Zeta functions
$$F(s)=\frac{R_{\Gamma_1}(s)}{R_{\Gamma_2}(s)}=\prod_{\ell } (1-e^{-s\ell})^{\Delta m(\ell)}.$$
A priori, this equality holds for $\Re(s)>d-1$, however, if we know that
\begin{equation}\label{e:expbound}
\sum_{\ell\leq T}|\Delta m(\ell)|=O(e^{cT}),
\end{equation}
then the right hand side absolutely converges for $\Re(s)>c$ and hence defines an analytic function that has no zeroes or poles on that half plane.

The condition $\dl(\Gamma_1,\Gamma_2)<\tfrac{d-1}{2}$ implies that \eqref{e:expbound} holds with some $c<\tfrac{d-1}{2}$ and hence the zeroes and poles of $R_{\Gamma_1}(s)$ and $R_{\Gamma_2}(s)$ in the half pane $\Re(s)>c$ must be the same. From the factorization of $R_{\Gamma_i}(s)$ as a product of Selberg Zeta functions in \eqref{e:RF1} \eqref{e:RF2} together with the location of the poles and zeroes of the Zeta functions $Z_\Gamma(\sigma_k,s)$ given in Proposition \ref{p:GonPark}, we see that $R_{\Gamma_i}(s)$ has no zeroes in $\Re(s)\geq \frac{d-1}{2}$ and its poles there all come from the spectral zeroes of $Z_\G(\sigma_0,s)$ with $\sigma_0$ the trivial representation. That is, $R_{\Gamma_i}(s)$ has poles at $s=\frac{d-1}{2}+ir$ whenever $\lambda=\frac{(d-1)^2}{4}+r^2$ is an eigenvalue of $\lap$ with eigenfunction in $L^2(X_{\Gamma_i})$ with order given by the multiplicity of the eigenvalue. In particular, this shows that the discrete spectrum of $\Gamma_1$ and $\Gamma_2$ must be the same.

To see that the volumes are also equal we use the Weyl law, which for non-compact hyperbolic manifolds takes the form,
$$N_\G(T)-\frac{1}{2\pi}\int_{-T}^T \frac{\vphi_{\G}'}{\vphi_\G}(\tfrac{d-1}{2}+it)dt=C_d\vol(X_\G)T^d+O(T^{d-1})+O(T\log(T)),$$
where $N_{\Gamma}(T)$ denotes the number of Laplace eigenvalues $\lambda< \tfrac{(d-1)^2}{4}+T^2$ and $C_d$ is a constant depending only on $d$.
The term involving the scattering determinant can be further evaluated by counting poles. Specifically if we let
$$S_\G(T)=\{\eta\in S_{\Gamma,\sigma_0}:|\Im(\eta|\leq T\},$$
following the same argument as in \cite[equation (0.15)]{Selberg90}, we see that
$$|S_\G(T)|=-\frac{1}{2\pi}\int_{-T}^T \frac{\vphi_{\G}'}{\vphi_\G}(\tfrac{d-1}{2}+it)dt+O(T),$$
 and the Weyl law takes the form
\begin{equation}\label{e:Weyl}
N_{\Gamma}(T)+|S_\G(T)|=C_d\vol(X_{\Gamma})T^d+O(T^{d-1})+O(T\log(T)).
\end{equation}
Comparing this for the two lattices, noting that $N_{\Gamma_1}(T)=N_{\Gamma_2}(T)$ and bounding all error terms by, say $O(T^{d-1/2})$, we see that
$$|S_{\G_1}(T)|-|S_{\G_2}(T)|=C_d(\vol(X_{\Gamma_1})-\vol(X_{\Gamma_2}))T^d+O(T^{d-1/2}).$$

Now let $c'=\max\{c,\tfrac{d-3}{2}\}$ so that $\frac{R_{\Gamma_1}(s)}{R_{\Gamma_2}(s)}$ has no poles or zeros in $c'<\Re(s)<\tfrac{d-1}{2}$. In this range all zeroes of $R_{\Gamma_i}(s)$ are the residual zeros of $Z_{\Gamma_i}(\sigma_0,s)$, and hence the residual zeroes of $Z_{\Gamma_1}(\sigma_0,s)$, and $Z_{\Gamma_2}(\sigma_0,s)$ in $\Re(s)>c'$ must be the same. Fix some $c'<c_1<\tfrac{d-1}{2}$ and let
$$S_\G(c_1,T)=\{\eta\in S_\G(T): \Re(\eta)\leq c_1\}.$$
Recalling the relation between the residual zeroes and the poles of the scattering matrix we get that
$$|S_{\G_1}(T)|-|S_{\G_2}(T)|=|S_{\G_1}(c_1,T)|-|S_{\G_2}(c_1,T)|,$$
where all sets are counted with multiplicities.
Finally, recalling that $\eta\in S_\G$ if and only if $d-1-\eta$ is a zero of $\vphi_\G$ and using \eqref{e:ZeroDist1} we get that for each lattice
\begin{eqnarray*}
|S_{\G_i}(c_1,T)|&\leq&\frac{1}{\tfrac{d-1}{2}-c_1}\sum_{\eta \in S_{\G_i}(c_1,T)}(\tfrac{d-1}{2}-\Re \eta)\\
&\ll& \sum_{\eta \in S_{\G_i}(T)}(\tfrac{d-1}{2}-\Re\eta)
\ll T\log(T)
\end{eqnarray*}
We thus get that $||S_{\G_1}(T)|-|S_{\G_2}(T)||\ll T\log T$ and hence
\begin{eqnarray*}
C_d(|\vol(X_{\Gamma_1})-\vol(X_{\Gamma_2})|)T^d&\ll& T^{d-1/2},
\end{eqnarray*}
implying that $\vol(X_{\Gamma_1})=\vol(X_{\Gamma_2})$.

For the results on the number of cusps, let $d\leq 3$ and assume that $\dl(\G_1,\G_2)<\tfrac{1}{4}$, so that $D_L(\Gamma_1,\Gamma_2,T)=O(e^{cT})$ with some $c<\tfrac{1}{4}$. Comparing the Zeta functions as before we see that the poles of $\vphi_{\Gamma_1}$ and $\vphi_{\Gamma_2}$ in the half plane $\Re(s)>c$ are the same.
Hence
\begin{eqnarray*}\lefteqn{\sum_{\eta\in S_{\Gamma_1}(T)}(\tfrac{d-1}{2}-\Re\eta)-\sum_{\eta\in S_{\Gamma_2}(T)}(\tfrac{d-1}{2}-\Re\eta)}\\
&&=\sum_{\eta\in S_{\Gamma_1}(c,T)}(\tfrac{d-1}{2}-\Re\eta)-\sum_{\eta\in S_{\Gamma_2}(c,T)}(\tfrac{d-1}{2}-\Re\eta).
\end{eqnarray*}
By \eqref{e:ZeroDist1}, the left hand side equals $\frac{(\kappa_1-\kappa_2)(d-1)}{2\pi}T\log(T)+O(T)$, hence, if we can bound each one of the sums on the right by
$O(T\log\log(T))$ we would get that $\kappa_1=\kappa_2$ as claimed.

To bound the sum on the right, for $\Gamma=\G_i$ any one of the two lattices we can write each pole as $\eta=d-1-\beta-i\gamma$ with $\beta+i\gamma$ a zero of $\vphi_\G(s)$. Setting $\alpha=d-1-c$ we get
\begin{eqnarray*}
\sum_{\eta\in S_{\Gamma}(c,T)}(\tfrac{d-1}{2}-\Re\eta)=\mathop{\sum_{|\gamma|<T}}_{\beta \geq \alpha}(\beta-\tfrac{d-1}{2}).
\end{eqnarray*}
The condition $c<\tfrac{1}{4}$ implies $\alpha>\alpha_0$ and we can bound
\begin{eqnarray*}
\mathop{\sum_{|\gamma|<T}}_{\beta \geq \alpha_0}(\beta-\alpha_0)&\geq& \mathop{\sum_{|\gamma|<T}}_{\beta \geq \alpha}(\beta-\alpha_0)
\geq(\alpha-\alpha_0)\mathop{\sum_{|\gamma|<T}}_{\beta \geq \alpha}1\\
&= & \left(\tfrac{\alpha_0-\alpha}{\alpha-\frac{d-1}{2}}\right)\mathop{\sum_{|\gamma|<T}}_{\beta \geq \alpha}(\alpha-\beta+\beta-\tfrac{d-1}{2}),
\end{eqnarray*}
hence
\begin{eqnarray*}
\mathop{\sum_{|\gamma|<T}}_{\beta \geq \alpha}(\beta-\tfrac{d-1}{2})\leq \left(\tfrac{\alpha-\frac{d-1}{2}}{\alpha_0-\alpha}\right)\mathop{\sum_{|\gamma|<T}}_{\beta \geq \alpha_0}(\beta-\alpha_0)+\mathop{\sum_{|\gamma|<T}}_{\beta \geq \alpha}(\beta-\alpha),
\end{eqnarray*}
and from \eqref{e:ZeroDist2} the sums on the right are bounded by $O(T\log\log(T))$ concluding the proof.

\begin{rem}
 If we assume that $d_L(\Gamma_1,\Gamma_2)<\tfrac{1}{2}$ we get that all the poles and zeroes of $R_{\Gamma_1}(s)$ and $R_{\Gamma_2}(s)$ cancel out in the half plane $\Re(s)\geq \tfrac{1}{2}$. However, in contrast to the compact case, we cannot deduce from this that Zeta functions are the same because we cannot exclude the possibility that the spectral zeroes of $Z_{\Gamma_i}(\sigma_k,s)$ will cancel out with residual poles of $Z_{\Gamma_i}(\sigma_{k'},s_j)$ for some $k'<k$.
\end{rem}

\begin{rem}
The result on the number of cusps is restricted to dimensions $d=2,3$ for a similar reason. In general, for $d>3$ our result shows that almost all poles of $Z_\G(\sigma_0,s)$ are in the half plane $\Re(s)\geq \frac{1}{4}$, but even if we assume that $\dl(\Gamma_1,\Gamma_2)<\frac{1}{4}$ we can't conclude that the number of cusps is the same because of possible cancelation with zeroes of the other Zeta functions. 
\end{rem}

%

\section{Distribution of Poles of Eisenstein series}\label{s:DistPoles}
 We conclude with the proof of Theorem \ref{t:ZeroDist1}, generalizing the results of \cite{Selberg90} on the distribution of poles of Eisenstein series to hyperbolic manifolds of higher dimensions. Our proof is very similar to Selberg's original proof, that is, we express the scattering coefficients as certain Dirichlet series with positive coefficients and then use general results about such Dirichlet series to study the distribution of the zeroes of the scattering determinant. The only new difficulty is coming from the fact that in higher dimensions these Dirichlet series have a faster growth rate on the critical line, requiring some modifications of the arguments in \cite{Selberg90}.

\subsection{Dirichlet series with positive coefficients}
We prove some results about general Dirichlet series with positive coefficients of the form,
\begin{equation}\label{e:fseries}
f(s)=\sum_{n=1}^\infty \frac{a_n}{\lambda_n^s},
\end{equation}
with all $\lambda_n,a_n>0$. In particular, we need the following generalization of Selberg's \cite[Lemma 3]{Selberg90}
\begin{prop}\label{l:boundintf}
Let $f(s)$ be as in \eqref{e:fseries}. Assume that the series absolutely converges for $\Re(s)>1$, that it has an analytic continuation to $\Re(s)>0$ with a simple pole at $s=1$ and at most finitely many poles in the strip $0<\Re(s)<1$, all in the interval (0,1). Further assume that $f(s)$ has a continuous extension to $\Re(s)=0$ and it satisfies the growth condition
\begin{equation}\label{e:tbound}
|f(\sigma+it)|=O(|t|^{r}),\quad 0\leq \sigma <3/2,\; t\gg 1.
\end{equation}
for some $r\geq 1$. Let $\sigma_1=\frac{4r-1}{4r}$, then for all $\sigma\geq \sigma_1$ we have
\[\frac{1}{T}\int_1^T|f(\sigma+it)|^2dt\ll \min(\frac{1}{(\sigma-\sigma_1)^2},\log^2(T)).\]
\end{prop}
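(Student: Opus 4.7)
The plan is to follow Selberg's strategy from \cite[Lemma 3]{Selberg90}, adapting it to accommodate a polynomial growth rate $r\ge 1$ in place of $r=1$. The starting point is that the positivity of the coefficients $a_n>0$ gives the pointwise bound $|f(\sigma+it)|\le f(\sigma)$ for $\sigma>1$, which together with the simple pole at $s=1$ yields $f(\sigma)\ll (\sigma-1)^{-1}$ as $\sigma\to 1^+$. Combining this with the hypothesized bound $|f(it)|\ll|t|^r$ on $\Re(s)=0$ and applying the Phragm\'en--Lindel\"of principle to $(s-1)\prod_j(s-s_j)f(s)$ (where the $s_j$ are the finitely many real poles in $(0,1)$) in the strip $0\le \Re(s)\le 1+\epsilon$ yields the convex pointwise bound $|f(\sigma+it)|\ll |t|^{r(1-\sigma)+\epsilon}$ for $\sigma\in[0,1]$ and $|t|\gg 1$.

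For the mean-square estimate I would use the fact that the $a_n$ are real, so $|f(\sigma+it)|^2=f(\sigma+it)f(\sigma-it)$ is the restriction to $\Re(s)=\sigma$ of the meromorphic function $F(s)=f(s)f(2\sigma-s)$. Introducing a nonnegative smooth cutoff $k(t)$ localized to $|t|\le T$ with Fourier transform $\hat k$, expanding both Dirichlet series for $\sigma>1$ gives the identity
$$\int_{-\infty}^\infty |f(\sigma+it)|^2 k(t)\,dt = \sum_{m,n} \frac{a_m a_n}{(\lambda_m\lambda_n)^\sigma}\hat k(\log(\lambda_m/\lambda_n)).$$
Here the diagonal $m=n$ contributes $\hat k(0)\sum_n a_n^2/\lambda_n^{2\sigma}$; bounding the partial sums $\sum_{\lambda_n\le x}a_n\ll x$ (a consequence of the simple pole at $s=1$) and applying partial summation handles the diagonal, while a Hilbert-type inequality controls the off-diagonal.

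To descend to $\sigma\in[\sigma_1,1]$ I would shift the line of integration in the Mellin representation of the above average across the pole of $F$ at $s=1$, which produces the main term proportional to $T$ with a logarithmic factor, and bound the integral on the shifted line using the convex bound established in the first paragraph. The principal obstacle, and the source of the threshold $\sigma_1=1-\tfrac{1}{4r}$, is the quantitative balancing: on a shifted line $\Re(s)=\sigma_0$ close to $0$, the factor $F(s)$ is of size $|t|^{2r(1-\sigma)}$ while shifting the contour by a distance $\sigma-\sigma_0$ gains a factor $T^{-2(\sigma-\sigma_0)}$ from the Mellin transform of $k$; requiring the resulting error to be $\ll T\log^2 T$ after integrating over $|t|\asymp T$ forces $\sigma\ge 1-\tfrac{1}{4r}$. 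Finally, interpolating between the endpoint bound at $\sigma=\sigma_1$ and the trivial bound $O((\sigma-1)^{-2})$ for $\sigma>1+1/\log T$, and choosing the support width of $k$ optimally, yields the stated estimate $\min((\sigma-\sigma_1)^{-2},\log^2 T)$.
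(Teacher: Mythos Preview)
Your proposal diverges from the paper's argument and, as written, has a genuine gap at the crucial step.

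\textbf{What the paper actually does.} The proof does not shift contours in any integral representation of $\int|f|^2$. Instead it (i) approximates $f$ by a finite Riesz-weighted Dirichlet polynomial $f^*_{x,k}(s)=\sum_{\lambda_n\le x}a_n(1-\lambda_n/x)^k\lambda_n^{-s}$ with $x=T^{r/\sigma_0}$, via a Perron-type integral (Lemma~\ref{l:ff*}); (ii) bounds $\int_{-T}^T|f^*|^2$ by inserting the Fej\'er weight $(2T\sin(t/2T)/t)^2$ and expanding, which localizes the double sum to $|\log(\lambda_m/\lambda_n)|\le 1/T$; and (iii) estimates the short-interval inner sum $\sum_{\lambda_n e^{-1/T}\le\lambda_m\le\lambda_n e^{1/T}}a_m\lambda_m^{-\sigma}$ using the asymptotic $A_f(x)=ax+O(x^{1-1/(2r)}\log x)$ from Lemma~\ref{l:Ax}. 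The exponent $1-\tfrac{1}{2r}$ in this error term is exactly what produces the threshold $\sigma_1=1-\tfrac{1}{4r}$: the inner sum contributes a term $\lambda_n^{(2r-1)/(2r)-\sigma}\log\lambda_n$, and summing over $n$ forces $2\sigma>1+(2r-1)/(2r)$, i.e.\ $\sigma>\sigma_1$.

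\textbf{The gap in your sketch.} Your explanation for the threshold---balancing the convex bound against a putative gain $T^{-2(\sigma-\sigma_0)}$ from ``the Mellin transform of $k$'' after shifting a contour---does not hold up. First, $k$ is a function of $t\in\bbR$, so a Mellin transform is not available; and the Fourier transform $\hat k$ carries no such $T$-power saving. Second, the convex bound for $F(s)=f(s)f(2\sigma-s)$ on any vertical line $\Re(s)=\sigma_0$ is $|t|^{r(1-\sigma_0)}\cdot|t|^{r(1-(2\sigma-\sigma_0))}=|t|^{2r(1-\sigma)}$, \emph{independent of $\sigma_0$}, so shifting buys nothing. Third, and most importantly, you only invoke $\sum_{\lambda_n\le x}a_n\ll x$; with that alone the short-interval sum is $\ll\lambda_n^{1-\sigma}/T$ plus an error of size $\lambda_n^{1-\sigma}$ (not $\lambda_n^{1-1/(2r)-\sigma}$), and the method only reaches $\sigma\ge 1$. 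The quantitative error term in $A_f(x)$, obtained by a contour shift in Perron's formula using the growth hypothesis $|f(it)|\ll|t|^r$, is the missing ingredient that drives $\sigma_1$ below $1$.
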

\begin{rem}
For $r< 1$ our proof gives a similar result with $\sigma_1=\tfrac{3}{4}$ and an error of $\log(T)$ instead of $\log^2(T)$.
This is worse than the original result of Selberg's \cite[Lemma 3]{Selberg90} who showed this for $r=1/2$ with a better value of $\sigma_1=\tfrac{1}{2}$.
Our proof goes along the same line as his, except in one point where his argument seems to work only when $r\leq 1/2$ and we had to make some modifications, resulting in a slightly worse value for $\sigma_1$.
\end{rem}

Before we proceed with the proof we recall a couple of standard results on these Dirichlet series.
\begin{lem}\label{l:Ax}
Let $f(s)$ be as in Proposition  \ref{l:boundintf} and let
\begin{equation}\label{e:Ax}
A_f(x)=\sum_{\lambda_n\leq x} a_n.
\end{equation}
Then
\begin{equation}\label{e:Af}
A_f(x)=a x+\sum_{j=1}^k x^{\rho_j}p_j(\log(x))+O(x^{1-\frac{1}{2r}}\log(x)),
\end{equation}
where $a=\mathrm{Res}_{s=1}f(s)$, $\rho_1,\ldots,\rho_k$ are the poles of $f(s)$ in $(0,1)$, and $p_j$ are certain polynomials.
\end{lem}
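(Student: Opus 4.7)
The plan is to proceed in two stages: first establish the asymptotic for the once-smoothed Ces\`aro sum $A_f^{(1)}(x) = \int_0^x A_f(u)\,du$ via Perron's formula and a contour shift, then recover the asymptotic for $A_f(x)$ itself from $A_f^{(1)}$ via the standard monotonicity trick. The reason for working with $A_f^{(1)}$ rather than $A_f$ directly is that the factor $1/(s+1)$ that appears after integration provides the additional decay in $|t|$ needed to absorb the polynomial growth of $f$ on vertical lines.

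For the first stage, Perron's formula for $c>1$ gives
\[A_f^{(1)}(x) = \frac{1}{2\pi i}\int_{c-i\infty}^{c+i\infty} f(s)\,\frac{x^{s+1}}{s(s+1)}\,ds.\]
I would shift the contour to $\Re(s)=0$, truncated at height $\pm T$, picking up residues at $s=1$ (contributing $\tfrac{1}{2}ax^2$) and at each $s=\rho_j$ (contributing $x^{\rho_j+1}$ times a polynomial $\tilde p_j$ in $\log x$, of degree one less than the order of the pole). The remaining pieces are bounded using $|f(\sigma+it)|\ll|t|^r$: the vertical integral on $\Re(s)=0$ contributes $O(xT^{r-1})$ (or $O(x\log T)$ when $r=1$); the horizontal connectors at heights $\pm T$ are bounded via Phragm\'en--Lindel\"of convexity across the strip (applied to $f$ after subtracting its principal parts at $s=1$ and the $\rho_j$), which shows they are dominated by the other error terms; and the Perron truncation tail on $\Re(s)=c$ contributes $O(x^2/T)$. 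Choosing $T\sim x^{1/r}$ balances the vertical integral against the truncation tail and yields $A_f^{(1)}(x) = \tfrac{1}{2}ax^2 + \sum_j x^{\rho_j+1}\tilde p_j(\log x) + O(x^{2-1/r}\log x)$.

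For the second stage, monotonicity of $A_f$ gives, for any $h>0$,
\[\frac{A_f^{(1)}(x)-A_f^{(1)}(x-h)}{h}\ \le\ A_f(x)\ \le\ \frac{A_f^{(1)}(x+h)-A_f^{(1)}(x)}{h}.\]
Differentiating the main terms of $A_f^{(1)}$ produces exactly $ax+\sum_j x^{\rho_j}p_j(\log x)$, and the total error splits into the smoothed error $O(x^{2-1/r}\log x/h)$ inherited from stage one and the Taylor remainder $O(h)$ coming from the quadratic main term $\tfrac{1}{2}ax^2$. Optimizing $h\sim x^{1-1/(2r)}$ balances these two contributions and gives the claimed bound $O(x^{1-1/(2r)}\log x)$. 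The main technical obstacle is setting up the Phragm\'en--Lindel\"of interpolation carefully: convexity requires holomorphy of $f$ (minus its principal parts) on the full strip $0\le\Re(s)\le c$ together with the hypothesized $|t|^r$ bound on $\Re(s)=0$ and the obvious $O(1)$ bound on $\Re(s)=c$. Once this interpolation is in place, the remaining error bookkeeping across vertical, horizontal, truncation, and smoothing contributions is routine.
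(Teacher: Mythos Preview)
Your proposal is correct and follows essentially the same approach as the paper: the paper defines $B_f(x)=\frac{1}{x}\int_1^x A_f(t)\,dt$ (so $xB_f(x)$ is your $A_f^{(1)}(x)$), applies the same Perron-type formula with kernel $x^s/(s(s+1))$, shifts the contour to $\Re(s)=0$ using convexity for the horizontal segments and the bound $|f(it)|\ll |t|^r$ on the vertical line, takes $T\asymp x^{1/r}$, and then recovers $A_f$ from its integral by monotonicity with the same square-root loss in the exponent.
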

\begin{proof}
This follows from standard techniques of analytic number theory. Specifically,
let
$$B_f(x)=\sum_{\lambda_n\leq x}a_n(1-\tfrac{\lambda_n}{x}),$$
that is, $B_f(x)=\frac{1}{x}\int_1^x A_f(t)dt$. For any $c>1$ we can write $B_f(x)=\frac{1}{2\pi i}\int_{\Re(s)=c}\frac{f(s)x^s}{s(s+1)}ds$. Taking $c=1+\frac{1}{\log(x)}$  and noting that $|f(s)|\ll \frac{1}{c-1}\ll \log(x)$ is uniformly bounded there we get that
$$B_f(x)=\frac{1}{2\pi i}\int_{c-iT}^{c+iT}\frac{f(s)x^s}{s(s+1)}ds+O(\frac{x\log(x)}{T}).$$
Shifting the contour of integration to $\Re(s)=\epsilon$ (and taking $\epsilon\to 0$) we get that
\begin{eqnarray*}\frac{1}{2\pi i}\int_{c-iT}^{c+iT}\frac{f(s)x^s}{s(s+1)}ds &=& \frac{ax}{2}+\sum_j x^{\rho_j}\tilde{p}(\log(x))\\
&&+O(T^{r-1}\log(T))+O(\frac{x\log(T)}{T^2\log(x/T^r)})
\end{eqnarray*}
where the explicit terms come from the residues at the poles, we used $f(\epsilon+it)\ll t^r$ to bound the integral on the vertical line $\Re(s)=\epsilon$ by $O(T^{r-1}\log(T))$, and the convexity bound $f(\sigma+iT)\ll T^{r(1-\sigma)}\log(T)$ to bound the horizontal integral from $\epsilon+iT$ to $c+iT$ by $O(\frac{x\log(T)}{T^2\log(x/T^r)})$. (For $r<1$ the first error term would be $O(1)$).
Putting it all together with a choice of $T=\left(\frac{x}{2}\right)^{1/r}$ gives
\begin{equation}\label{e:Bx}
xB_f(x)=F(x)+O(x^{2-\frac{1}{r}}\log(x)),
\end{equation}
where $F(x)=\frac{ax^2}{2}+\sum_j x^{1+\rho_j}\tilde{p}_j(\log(x)$. Finally, since
$\int_{1}^xA_f(t)dt=xB_f(x)$ and $A_f(x)$ is increasing we get that \eqref{e:Bx} implies
$$A_f(x)=F'(x)+O(x^{1-\frac{1}{2r}}\log(x)),$$
concluding the proof.
\end{proof}

\begin{lem}\label{l:ff*}
Let $f(s)$ be as above. For a large parameter $x\geq1$ and $k\in \bbN$ let $f^*(s)=f^*_{x,k}(s)$ be defined by the finite series
\begin{eqnarray}\label{e:f*}
f^*(s)&=&\sum_{\lambda_n\leq x}a_n(1-\frac{\lambda_n}{x})^k\lambda_n^{-s}
\end{eqnarray}
Let $0<\sigma_0<1$ and $k>r$. Then for any $\sigma_0\leq \sigma\leq 3/2$ and $x^{\frac{1-\sigma_0}{k+1}}\leq t\leq x^{\frac{\sigma_0}{r}}$ we have
\begin{equation}\label{e:f*f}f_{x,k}^*(s)=f(s)+O(1).\end{equation}
\end{lem}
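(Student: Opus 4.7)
The plan is to use Mellin inversion to represent $f^*(s)$ as a contour integral, shift the contour past $w = 0$ to extract $f(s)$ as a residue, and show that all remaining residues together with the shifted integral are $O(1)$ in the prescribed range of $t$. The two exponents in the hypothesis will correspond, respectively, to the size of the residue at $w=1-s$ and to the size of the integral on the shifted line.

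First, I would start from the Perron-type identity
\[
\frac{1}{2\pi i}\int_{(c)} \frac{y^w}{w(w+1)\cdots(w+k)}\,dw \;=\; \frac{1}{k!}(1 - 1/y)^k \cdot \mathbf{1}_{y\geq 1}, \qquad c>0,
\]
apply it with $y = x/\lambda_n$, substitute into the definition of $f^*(s)$, and interchange summation with integration on a contour $\Re(w) = c > \max(0, 1-\sigma)$, where $f(s+w)$ converges absolutely. This yields
\[
f^*(s) \;=\; \frac{k!}{2\pi i}\int_{(c)} f(s+w)\,\frac{x^w}{w(w+1)\cdots(w+k)}\,dw.
\]

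Next I would shift the contour to $\Re(w) = -\sigma_0$; this is legal since $0 < \sigma_0 < 1$ keeps the new line strictly between the poles $w = 0$ and $w = -1$ of the denominator, while $\sigma - \sigma_0 \geq 0$ ensures the growth bound \eqref{e:tbound} applies uniformly to $f(s+w)$ on the new line. The residues crossed are: at $w = 0$, yielding the main term $f(s)$; at $w = 1 - s$ (the simple pole of $f$), of order $x^{1-\sigma}/t^{k+1}$; and at $w = \rho_j - s$ for each of the finitely many poles $\rho_j \in (0,1)$ of $f$, of order $x^{\rho_j-\sigma}/t^{k+1}$. If $\sigma$ is large enough that some of these poles lie already to the left of $-\sigma_0$, the corresponding residues simply do not appear. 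Using $\sigma \geq \sigma_0$ and $\rho_j < 1$, the lower bound $t \geq x^{(1-\sigma_0)/(k+1)}$ is exactly what makes every one of these residues $O(1)$.

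To estimate the integral on $\Re(w) = -\sigma_0$, I would write $w = -\sigma_0 + i\tau$ and apply \eqref{e:tbound} to get $|f(s+w)| \ll (1+|t+\tau|)^r$. Splitting the $\tau$-range into $|\tau| \leq t/2$, $t/2 < |\tau| \leq 2t$, and $|\tau| > 2t$: the middle and outer ranges contribute $O(x^{-\sigma_0}\, t^{r-k})$ using $k>r$ for integrability at infinity, while the inner range, where $|t+\tau| \asymp t$ and the denominator is bounded uniformly away from zero (since $-\sigma_0 \notin \{0,-1,\dots,-k\}$), dominates at $O(x^{-\sigma_0}\, t^r)$. The upper bound $t \leq x^{\sigma_0/r}$ then forces the whole integral to be $O(1)$, completing the proof.

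The only real obstacle is the growth exponent $r$ of $f$ on the critical strip: since \eqref{e:tbound} only permits pushing the contour down to $\Re(s+w) = 0$, one cannot shift past $\Re(w) = -\sigma$, which caps the decay factor of the error integral at $x^{-\sigma_0}$ and dictates the upper bound $t \leq x^{\sigma_0/r}$. Balancing this against the residue bound $x^{1-\sigma}/t^{k+1} = O(1)$ explains the asymmetric exponents $\sigma_0/r$ and $(1-\sigma_0)/(k+1)$ appearing in the statement.
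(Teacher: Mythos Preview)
Your proof is correct and follows essentially the same Mellin-inversion/contour-shift argument as the paper; the only difference is that you shift to $\Re(w)=-\sigma_0$ whereas the paper (in the variable $z=s+w$) shifts all the way to $\Re(z)=0$, i.e.\ $\Re(w)=-\sigma$. Your choice is a harmless variant that even spares you the extra residue at $w=-1$ (equivalently $z=s-1$) and the indentation at $\sigma=1$ that the paper has to handle, at the cost of the slightly weaker but still sufficient integral bound $O(x^{-\sigma_0}t^r)$ in place of $O(x^{-\sigma}t^r)$.
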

\begin{proof}
For any $c>1$ we can write
$$f^*(s)=\frac{k!}{2\pi i}\int_{c-i\infty}^{c+i\infty} \frac{x^{z-s}f(z)}{(z-s)(z-s+1)\cdots(z-s+k)}dz.$$
Shifting the contour of integration to $\Re(z)=\epsilon$ (and taking $\epsilon\to 0$) the pole at $z=s$ will contribute $f(s)$ (if $\sigma>1$ the pole at $z=s-1$ will contribute $-\frac{kf(s-1)}{x}=O(t^{r}x^{-1})$ and if $\sigma=1$ we avoid the pole on the imaginary axis by integrating over half a circle centered at $s-1$ with some small fixed radius). The contribution of a pole $\rho_j\in(0,1)$ of $f$ with multiplicity $1+n_j$ is bounded by $\frac{x^{\rho_j-\sigma}(\log(x))^{n_j}}{|\rho_j-s||\rho_j-s+1|\cdots|\rho_j-s+k|}=O(\frac{x^{1-\sigma}}{t^{k+1}})$ which also bounds the contribution of the simple pole at one. Finally, using \eqref{e:tbound} and the fact that $k>r$ we can bound the remaining integral by $O(t^{r}x^{-\sigma})$ giving that
$$f_{x,k}^*(s)=f(s)+O(\frac{x^{1-\sigma}}{t^{k+1}})+O(t^{r}x^{-\sigma})+O(t^rx^{-1}),$$
where the last error term occurs only when $\sigma>1$.
Consequently, for any $\sigma\geq \sigma_0$ and $x^{\frac{1-\sigma_0}{k+1}}\leq t\leq x^{\frac{\sigma_0}{r}}$ we have $f_{x,k}^*(s)=f(s)+O(1)$ as claimed.
\end{proof}

\begin{proof}[Proof of Proposition \ref{l:boundintf}]
Let $\sigma_1=\frac{4r-1}{4r}$ and fix some $\tfrac{1}{2}\leq \sigma_0<\sigma_1$. For $x=T^{r/\sigma_0}$ and $k>r$ large enough so that $\frac{r(1-\sigma_0)}{(k+1)\sigma_0}<\frac{1}{2r+1}$  let
$f^*(s)=f^*_{x,k}(s)$ as in \eqref{e:f*}.
By Lemma \ref{l:ff*} we have that $f^*(s)=f(s)+O(1)$ for all $\sigma>\sigma_0$ and $T^{\frac{1}{2r+1}}\leq t\leq T$.
We can thus bound for all $\sigma> \sigma_0$
\begin{eqnarray*}\int_1^T|f(\sigma+it)|^2&=& \int_1^{T^{\frac{1}{2r+1}}}|f(\sigma+it)|^2+\int_{T^{\frac{1}{2r+1}}}^T |f(\sigma+it)|^2\\
&\ll& \int_{-T}^T |f^*(\sigma+it)|^2+O(T),
\end{eqnarray*}
where we used \eqref{e:tbound} to bound
$$\int_1^{T^{\frac{1}{2r+1}}}|f(\sigma+it)|^2\ll \int_1^{T^{\frac{1}{2r+1}}}t^{2r}dt\ll T.$$

Next, using that $1\ll\frac{\sin(x)}{x}$ for $|x|\leq\tfrac{1}{2}$ we bound
\begin{eqnarray*}
\int_{-T}^T |f^*(\sigma+it)|^2dt &\ll &  \int_{-T}^T|f^*(\sigma+it)|^2\left(\frac{2T}{t}\sin(\frac{t}{2T})\right)^2dt\\
&\leq & \int_{-\infty}^\infty|f^*(\sigma+it)|^2\left(\frac{2T}{t}\sin(\frac{t}{2T})\right)^2dt\\
&=& 2T\sum_{\lambda_n, \lambda_m<x}\frac{a_n^*a_m^*}{(\lambda_n\lambda_m)^\sigma}\int_{-\infty}^\infty \left(\frac{\sin(t)}{t}\right)^2e^{2iTt\log(\frac{\lambda_m}{\lambda_n})}dt\\
&\ll& T\mathop{\sum_{\lambda_n, \lambda_m<x}}_{|\log(\frac{\lambda_m}{\lambda_n})|\leq \frac{1}{T}}\frac{a_n^*a_m^*}{(\lambda_n\lambda_m)^\sigma}\left(1-T|\log(\frac{\lambda_m}{\lambda_n})|\right),
\end{eqnarray*}
where $a_n^*=a_n(1-\frac{\lambda_n}{x})^k\leq a_n$.
Using the condition that $|\log(\frac{\lambda_m}{\lambda_n})|\leq \frac{1}{T}$ we may restrict the double sum to an outer sum over $\lambda_n\leq x$ and an inner sum over  $\lambda_ne^{-1/T}\leq \lambda_m\leq \lambda_ne^{1/T}$ so
\begin{eqnarray*}
\int_{-T}^T |f^*(\sigma+it)|^2dt&\ll & T\sum_{\lambda_n\leq x}\frac{a_n}{\lambda_n^{\sigma}} \left(\sum_{e^{-1/T}\leq \frac{\lambda_m}{\lambda_n}\leq e^{1/T}}\frac{a_m}{\lambda_m^\sigma}\right).\\
\end{eqnarray*}
Now use  \eqref{e:Af} and summation by parts to bound the inner sum by
\begin{eqnarray*}
\sum_{\lambda_ne^{-1/T}\leq \lambda_m\leq \lambda_ne^{1/T}}\frac{a_m}{\lambda_m^\sigma}\ll \frac{\lambda_n^{1-\sigma}}{T}+\log(\lambda_n)\lambda_n^{\frac{2r-1}{2r}-\sigma}
\end{eqnarray*}
hence
\begin{eqnarray*}
\int_{-T}^T |f^*(\sigma+it)|^2dt &\ll & \sum_{\lambda_n\leq x}\frac{a_n}{\lambda_n^{2\sigma-1}}+T\sum_{\lambda_n\leq x}\frac{a_n\log(\lambda_n)}{\lambda_n^{1+2(\sigma-\sigma_1)}}\\
&\ll&x^{2(1-\sigma)}+T\min\{ \frac{1}{(\sigma-\sigma_1)^2},\log^2(x)\}
\end{eqnarray*}
where we used \eqref{e:Af} and summation by parts to bound the first term by $O(x^{2(1-\sigma)})$, and bounded the sum in the second term by $f'(1+2(\sigma-\sigma_1))\ll \frac{1}{(\sigma-\sigma_1)^2}$ when $\sigma>\sigma_1$ and by $\log(x)\sum_{\lambda_n\leq x}\frac{a_n}{\lambda_n}\ll \log^2(x)$ in general. Recalling that $x=T^{r/\sigma_0}$, for any $\sigma\geq \sigma_1$ we can bound the first term by
$x^{2(1-\sigma)}\leq T^{\frac{2r(1-\sigma_1)}{\sigma_0}}\leq T$ while the second term is bounded by $O(T\min\{\frac{1}{(\sigma-\sigma_1)^2},\log^2(T)\}$.

\end{proof}

\subsection{Zeroes of scattering determinant}
In order to apply the above results on Dirichlet series we express the scattering coefficients as certain Dirichlet series with positive coefficients. Explicitly,
\begin{equation}\label{e:phiLij}
\phi_{ij}(s)=c_{j} \frac{\Gamma(s-\frac{d+1}{2})}{\Gamma(s)}L_{ij}(s),
\end{equation}
where $L_{ij}(s)$ is a Dirichlet series of the form
\begin{equation}\label{e:Lij}
L_{ij}(s)=\sum_{n=0}^\infty \frac{a_{ij}(n)}{\lambda_{ij}(n)^s},
\end{equation}
with $a_{ij}(n)\in \bbN$ and $\lambda_{ij}(n)>0$,
converging in the half plane $\Re(s)>d-1$ with a simple pole at $s=d-1$.
Since sums and products of Dirichlet series is also a Dirichlet series, the scattering determinant has an expression of the form
\begin{equation}\label{e:phitoL}
\vphi(s)=\left(\frac{\Gamma(s-\tfrac{d-1}{2})}{\Gamma(s)}\right)^{\kappa}L(s),
\end{equation}
where $L(s)$ is another a Dirichlet series with real but not necessarily positive coefficients, and the zeroes of $\vphi(s)$ in the half plane $\Re(s)>\tfrac{d-1}{2}$ are the same as the zeroes of $L(s)$.
We can also rewrite it as
\begin{equation}\label{e:Ltof}
L(s)=ab^{d-1-2s}L^*(s)
\end{equation}
for some $a,b>0$ with
\begin{equation}\label{e:Lseries}
L^*(s)=1+\sum_{n=1}^\infty\frac{a_n}{\lambda_n^s}
\end{equation}
with all $a_n\in \bbR$ and $1<\lambda_1<\lambda_2<\ldots$
\begin{rem}
The fact that the scattering coefficients can be expressed as such Dirichlet series is crucial for
understanding the distribution of the poles. It seems that when there is no such expression (e.g., for asymptotically hyperbolic manifolds) the pole distribution is different (see \cite{Muller92}). Even though this result is well known to experts, since we did not find a proof for a general hyperbolic manifold in the literature we include a proof in an appendix.
\end{rem}

We now give a few estimates on $L^*(s)$ and then apply the general results on Dirichlet series to prove Theorem \ref{t:ZeroDist1}.
\begin{prop}\label{p:L*}
The function $L^*(s)$ is holomorphic in $\Re(s)>\frac{d-1}{2}$ except for finitely many poles in $(\tfrac{d-1}{2},d-1]$ and satisfies there
\begin{equation}\label{e:fboundsigma}
|L^*(\sigma+it)|=1+O(e^{-c\sigma}),
\end{equation}
 for some $c>0$ and all $\sigma>>1$ sufficiently large,
 \begin{equation}\label{e:fboundt}
L^*(\sigma+it)=O(|t|^{(d-1)\frac{\kappa}{2}}),
\end{equation}
for $\sigma\geq\frac{d-1}{2}$ and $t>>1$ sufficiently large, and
\begin{equation}\label{e:fcritical}
|L^*(\tfrac{d-1}{2}+it)|=a_\Gamma \left|\frac{\Gamma(\tfrac{d-1}{2}+it)}{\Gamma(it)}\right|^\kappa.
\end{equation}
\end{prop}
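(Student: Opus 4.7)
The plan is to establish the three estimates in sequence, using respectively the Dirichlet series expansion, the unitarity of the scattering matrix on the critical line, and a Phragm\'en--Lindel\"of convexity argument. The holomorphy statement itself is immediate from Proposition \ref{p:Muller}: in $\Re(s)>\tfrac{d-1}{2}$ the scattering determinant $\vphi(s)$ has only finitely many poles, all in $(\tfrac{d-1}{2},d-1]$, while the gamma factor $(\Gamma(s-\tfrac{d-1}{2})/\Gamma(s))^\kappa$ appearing in \eqref{e:phitoL} is holomorphic and non-vanishing there, so $L(s)$, and hence $L^*(s)$, inherits the same meromorphic structure.

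For \eqref{e:fboundsigma} I use that absolute convergence of the $L_{ij}(s)$ for $\Re(s)>d-1$ transfers to $L^*(s)=1+\sum_{n\geq 1}a_n/\lambda_n^s$. Fixing any $\sigma_0>d-1$ and using $\lambda_n\geq \lambda_1>1$, for $\sigma\geq \sigma_0$ I bound
\[
|L^*(\sigma+it)-1| \leq \sum_{n\geq 1}|a_n|\lambda_n^{-\sigma} \leq \lambda_1^{-(\sigma-\sigma_0)}\sum_{n\geq 1}|a_n|\lambda_n^{-\sigma_0} \ll e^{-c\sigma},
\]
with $c=\log\lambda_1>0$. For \eqref{e:fcritical} I appeal to the functional equation $\phi(s)\phi(d-1-s)=I$ combined with the reality $\phi(\bar s)=\overline{\phi(s)}$, which together force $\phi(\tfrac{d-1}{2}+it)$ to be unitary, so $|\vphi(\tfrac{d-1}{2}+it)|=1$. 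Inserting $s=\tfrac{d-1}{2}+it$ into \eqref{e:phitoL} and \eqref{e:Ltof} and noting $|b^{d-1-2s}|=1$ there yields
\[
1 = \left|\frac{\Gamma(it)}{\Gamma(\tfrac{d-1}{2}+it)}\right|^\kappa\cdot a\cdot\bigl|L^*(\tfrac{d-1}{2}+it)\bigr|,
\]
so \eqref{e:fcritical} holds with $a_\Gamma=1/a$.

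Finally, \eqref{e:fboundt} follows by a convexity argument. Stirling gives $|\Gamma(\tfrac{d-1}{2}+it)/\Gamma(it)|\sim |t|^{(d-1)/2}$ as $|t|\to\infty$, so \eqref{e:fcritical} already implies $|L^*(\tfrac{d-1}{2}+it)|\ll |t|^{(d-1)\kappa/2}$ on the critical line, while \eqref{e:fboundsigma} gives $|L^*(\sigma+it)|\ll 1$ on any line $\Re(s)=d-1+\delta$. Multiplying $L^*(s)$ by a polynomial $P(s)$ of degree $K$ whose zeros cancel the finitely many poles in $(\tfrac{d-1}{2},d-1]$ produces a function holomorphic and of polynomial growth in the strip $\tfrac{d-1}{2}\leq\Re(s)\leq d-1+\delta$, bounded by $|t|^{K+(d-1)\kappa/2}$ on the left edge and by $|t|^K$ on the right. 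Phragm\'en--Lindel\"of propagates the larger bound across the strip, and dividing by $P(s)$ (whose modulus is $\sim |t|^K$ for large $|t|$) gives $|L^*(\sigma+it)|\ll |t|^{(d-1)\kappa/2}$ uniformly, which combined with \eqref{e:fboundsigma} for $\sigma>d-1+\delta$ yields \eqref{e:fboundt}. The most delicate point is the verification of the unitarity relation on the critical line together with the matching of the normalization constants that identifies $a_\Gamma$; the remainder is routine Stirling and convexity.
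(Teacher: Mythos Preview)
Your treatment of the holomorphy, of \eqref{e:fboundsigma}, and of \eqref{e:fcritical} matches the paper's proof essentially verbatim (same constant $c=\log\lambda_1$, same use of $|\vphi(\tfrac{d-1}{2}+it)|=1$). The divergence is in \eqref{e:fboundt}: the paper does not use Phragm\'en--Lindel\"of. Instead it appeals to the Maass--Selberg relations, which yield the matrix inequality $\phi(s)\phi(s)^*\le C(\sigma,t)I$ and hence the uniform pointwise bound $|\phi_{ij}(\sigma+it)|\ll 1$ for $\tfrac{d-1}{2}\le\sigma\le d$ and $|t|>1$. This gives $|\vphi(\sigma+it)|=O(1)$ directly, and then \eqref{e:fboundt} follows from the Gamma ratio in \eqref{e:phitoL} with no convexity step at all. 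The Maass--Selberg bound on the individual $\phi_{ij}$ is also reused later (Lemma~\ref{l:boundintL*}) to feed Proposition~\ref{l:boundintf}, so the paper gets additional mileage from it.

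Your convexity route is in principle workable, but as written it has a gap: Phragm\'en--Lindel\"of requires an \emph{a priori} growth hypothesis on $P(s)L^*(s)$ in the interior of the strip (finite order, or at least sub-double-exponential growth). You assert the function is ``of polynomial growth in the strip'' before invoking the theorem, but polynomial growth in the strip is exactly the conclusion \eqref{e:fboundt} you are trying to establish, so the argument is circular at that point. To repair it you must supply an independent bound---for instance, cite that the scattering determinant $\vphi(s)$ is meromorphic of finite order (a known fact from the general theory of Eisenstein series), which is enough to run Phragm\'en--Lindel\"of. Once that is in place your approach does go through and gives the same bound; the paper's Maass--Selberg argument simply bypasses the issue by producing the interior bound directly.
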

\begin{proof}
The first part follows from the holomorphic extension of $\vphi(s)$.
The bound \eqref{e:fboundsigma} follows from the expansion \eqref{e:Lseries} which absolutely converges for $\sigma>d-1$ (one can take the constant $c=\log(\lambda_1)$).

Next, \eqref{e:phitoL} and \eqref{e:Ltof} give
\begin{equation}\label{e:ftophi}
L^*(s)=(ac)^{-1}b^{2s+1-d}\left(\frac{\Gamma(s)}{\Gamma(s-\tfrac{d-1}{2})}\right)^\kappa \vphi(s),
\end{equation}
and since $|\vphi(\tfrac{d-1}{2}+it)|=1$ we get \eqref{e:fcritical}.

Finally to show \eqref{e:fboundt}, for $Y>0$ sufficiently large (but fixed) let
$$E_i^Y(z,s)=\left\lbrace\begin{array}{cc} E_i(z,s) & y_j(z)<Y,\; j=1,\ldots,\kappa\\
E_i(z,s)-\delta_{ij}y_j^s-\phi_{ij}(s)y_j^{d-1-s} & y_j(z)\geq Y
\end{array}\right.,$$
and let $E^Y(z,s)$ denote the column vector with components $E_i^Y(z,s)$. From the Maass-Selberg relations (see \cite[Equation (7.44)]{SelbergHarmonic} and \cite[1.62]{CohenSarnak80}) we get the matrix equation
\begin{eqnarray*}\int_{\calF_\G}E^Y(z,s)E^Y(z,s)^*dz&=&\frac{1}{2\sigma-d-1}(Y^{2\sigma+1-d}I-Y^{d-1-2\sigma}\phi(s)\phi(s)^*)\\
&&+\frac{\phi(s)^*Y^{2it}-\phi(s)Y^{-2it}}{2it},
\end{eqnarray*}
where $s=\sigma+it$ and $I$ is the identity matrix. Since the matrix on the left hand side is positive so is the matrix on the right, implying that
\begin{equation*}
\phi(s)\phi(s)^*\leq Y^{4\sigma-2(d-1)}\left(\sqrt{1+(\tfrac{2\sigma+1-d}{2t})^2}+\frac{2\sigma+1-d}{2t}\right)^2I.
\end{equation*}
Taking the trace gives for $\tfrac{d-1}{2}\leq\sigma\leq d$ the uniform bound
\begin{equation}\label{e:vphibound}
|\phi_{ij}(\sigma+it)|\ll \left(\sqrt{1+(\tfrac{2\sigma+1-d}{2t})^2}+\frac{2\sigma+1-d}{2t}\right).
\end{equation}
In particular, $\vphi(\sigma+it)=O(1)$ for $\sigma\in (\tfrac{d-1}{2},d)$ and $|t|>1$. Combining this with \eqref{e:ftophi} for $\tfrac{d-1}{2}<\sigma\leq d$ gives the bound
$$|L^*(\sigma+it)|\ll \left|\frac{\Gamma(\sigma+it)}{\Gamma(\sigma-\tfrac{d-1}{2}+it)}\right|^\kappa\ll |t|^{\frac{(d-1)\kappa}{2}}.$$
Since $L^*(\sigma+it)=O(1)$ for $\sigma\geq d$ (from the series expression) this concludes the proof of \eqref{e:fboundt}.
\end{proof}

\begin{lem}\label{l:intL*half}
There are constants $B_\G,C_\G$ depending on $\G$ such that
\begin{eqnarray*}\lefteqn{\frac{1}{2\pi}\int_{-T}^T(T-|t|)\log|L^*(\tfrac{d-1}{2}+it)|dt=}\\&&
\tfrac{\kappa(d-1)}{4\pi}T^2\log T+B_\G T^2+C_\G T +O(\log(T)).
\end{eqnarray*}
\end{lem}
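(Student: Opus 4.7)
The plan is to use the explicit formula \eqref{e:fcritical} for $|L^*|$ on the critical line to reduce the integral to a Gamma function calculation, which can then be evaluated by Stirling's asymptotic expansion. Taking logarithms in \eqref{e:fcritical} gives $\log|L^*(\tfrac{d-1}{2}+it)| = \log a_\Gamma + \kappa\Psi(t)$, where $\Psi(t) := \log|\Gamma(\tfrac{d-1}{2}+it)| - \log|\Gamma(it)|$. Since $\int_{-T}^T(T-|t|)dt = T^2$, the constant piece $\log a_\Gamma$ contributes $\tfrac{\log a_\Gamma}{2\pi}T^2$ and is absorbed into $B_\Gamma T^2$. It then remains to analyze $\tfrac{\kappa}{2\pi}\int_{-T}^T(T-|t|)\Psi(t)\,dt$.

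I would apply Stirling's expansion to obtain $\log|\Gamma(\sigma+it)| = (\sigma-\tfrac{1}{2})\log|t| - \tfrac{\pi|t|}{2} + \tfrac{1}{2}\log(2\pi) + O(1/t^2)$ for fixed $\sigma\geq 0$ and $|t|\gg 1$, noting that no intermediate $O(1/|t|)$ correction appears. Subtracting the $\sigma=0$ and $\sigma=\tfrac{d-1}{2}$ cases yields $\Psi(t) = \tfrac{d-1}{2}\log|t| + O(1/t^2)$ for large $|t|$. Near $t=0$, the reflection formula $|\Gamma(it)|^2 = \pi/(t\sinh \pi t)$ gives $\log|\Gamma(it)| = -\log|t| + O(t^2)$, so $\Psi(t) = \log|t| + \log|\Gamma(\tfrac{d-1}{2})| + O(t^2)$ there. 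Writing $\Psi(t) = \tfrac{d-1}{2}\log|t| + r(t)$, the remainder $r$ is then locally integrable on $\bbR$ (with at most a logarithmic singularity at $0$) and satisfies $r(t)=O(1/t^2)$ at infinity.

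Next, I would evaluate the two pieces separately. The elementary integral $\int_{-T}^T(T-|t|)\log|t|\,dt$ equals $T^2\log T - \tfrac{3}{2}T^2$ exactly, and this will give the leading $\tfrac{\kappa(d-1)}{4\pi}T^2\log T$ term together with a contribution to $B_\Gamma T^2$. For the remainder, splitting $\int_0^T(T-t)r(t)dt = T\int_0^T r\,dt - \int_0^T tr(t)\,dt$, the first term equals $TR_\Gamma + O(1)$ where $R_\Gamma := \int_0^\infty r(t)\,dt$ converges absolutely by the endpoint behavior of $r$, while the second term is $O(\log T)$ because $tr(t) = O(1/t)$ at infinity and $tr$ is bounded near $0$. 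This gives $\int_{-T}^T(T-|t|)r(t)\,dt = 2R_\Gamma T + O(\log T)$, contributing $C_\Gamma T + O(\log T)$.

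Assembling all contributions produces the claimed expansion with explicit values $B_\Gamma = \tfrac{\log a_\Gamma}{2\pi} - \tfrac{3\kappa(d-1)}{8\pi}$ and $C_\Gamma = \tfrac{\kappa R_\Gamma}{\pi}$. The main obstacle I anticipate is confirming that the Stirling error is genuinely $O(1/t^2)$ rather than $O(1/|t|)$; otherwise $\int_0^T tr(t)\,dt$ could grow like $T$ or $T\log T$, which would be too large to fit inside $C_\Gamma T + O(\log T)$. A secondary subtlety is the local analysis at $t=0$, where the two distinct logarithmic asymptotics (coefficient $1$ near zero versus $\tfrac{d-1}{2}$ at infinity) must be reconciled to ensure $r \in L^1_{\mathrm{loc}}$, so that $R_\Gamma$ is well defined.
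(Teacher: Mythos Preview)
Your proposal is correct and complete: the Stirling remainder for $\log|\Gamma(\sigma+it)|$ is indeed $O(1/t^2)$ (the first post-leading correction is $\Re\bigl(\tfrac{1}{12z}\bigr)=\tfrac{\sigma}{12|z|^2}$, and the expansion of $t\arctan(\sigma/t)$ likewise has no $1/t$ term), so both of your anticipated obstacles dissolve and the argument goes through.

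However, your route differs from the paper's. Rather than appealing to Stirling, the paper uses the recursion $\Gamma(z+1)=z\Gamma(z)$ to factor
\[
\frac{\Gamma(\tfrac{d-1}{2}+it)}{\Gamma(it)}=\prod_{j=1}^{m}\bigl(\tfrac{d-1}{2}-j+it\bigr)\cdot\frac{\Gamma(\tfrac{\nu}{2}+it)}{\Gamma(it)},\qquad d-1=2m+\nu,\ \nu\in\{0,1\},
\]
and then evaluates the integrals $\int_0^T(T-t)\log|it+c|\,dt$ and, when $\nu=1$, $\int_0^T(T-t)\log|t\tanh(\pi t)|\,dt$ by hand. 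The paper's approach yields the constants $B_\Gamma,C_\Gamma$ in closed elementary form, whereas yours leaves $C_\Gamma=\tfrac{\kappa}{\pi}\int_0^\infty r(t)\,dt$ as a convergent but unevaluated integral. On the other hand, your argument is more uniform (no parity split on $d$) and makes the structure of the error clearer. Either method suffices for the lemma, since only the existence of such constants and the leading $\tfrac{\kappa(d-1)}{4\pi}T^2\log T$ term are used downstream.
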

\begin{proof}
Let $d-1=2m+\nu$ with $\nu\in\{0,1\}$, and expand
$$\Gamma(\tfrac{d-1}{2}+it)=\prod_{j=1}^m(\tfrac{d-1}{2}-j+it)\Gamma(\tfrac{\nu}{2}+it).$$
Using \eqref{e:fcritical} we evaluate the integral
\begin{eqnarray}\label{e:intlogf}
\frac{1}{2\pi}\int_{-T}^T(T-|t|)\log|L^*(\tfrac{d-1}{2}+it)|dt=\frac{\log(a_\Gamma)}{2\pi}T^2 \\
\nonumber +\frac{\kappa}{\pi}\sum_{j=1}^{m}\int_{0}^T(T-t)\log|it+\tfrac{d-1}{2}-j|dt\\ \nonumber
+\frac{\kappa}{\pi}\int_{0}^T(T-t)\log|\frac{\Gamma(\tfrac{\nu}{2}+it)}{\Gamma(it)}|dt.
\end{eqnarray}
The second term on the right hand side of \eqref{e:intlogf} can be evaluated as
\begin{eqnarray*}\lefteqn{
\frac{\kappa}{\pi}\sum_{j=1}^{m}\int_{0}^T(T-t)\log|it+\tfrac{d-1}{2}-j|dt=} \\
&&=\frac{\kappa m}{\pi}(\frac{T^2}{2}\log T-\frac{3T^2}{4})
+\frac{\kappa}{2\pi}\sum_{j=1}^{m}\int_0^T(T-t)\log(1+\frac{(\tfrac{d-1}{2}-j)^2}{t^2})dt\\
&&=\frac{\kappa m}{2\pi}T^2\log T-\frac{3\kappa m}{4\pi}T^2+\frac{\kappa m(d-m-2)}{8}T+O(\log(T)),
\end{eqnarray*}
and the last term is given by
\begin{eqnarray*}\lefteqn{
\frac{\nu\kappa}{\pi}\int_{0}^T(T-t)\log|\frac{\Gamma(\tfrac{1}{2}+it)}{\Gamma(it)}|dt
=\frac{\nu\kappa}{2\pi}\int_{0}^T(T-t)\log|\frac{t\tanh(\pi t)}{\pi}|dt}\\
&&=\frac{\nu\kappa}{4\pi}T^2\log(T)-(\frac{3\nu\kappa}{8\pi}+\frac{\nu\kappa\log(\pi)}{4\pi})T^2-
\frac{\nu\kappa}{16}T+O(1).
\end{eqnarray*}
Plugging these back in \eqref{e:intlogf} proves the claim with $B_\G=\tfrac{4\log(a_\Gamma)-\kappa(d-1+2\nu\log\pi)}{8\pi}$ and
$C_\G=\tfrac{\kappa(2m(m+\nu-1)-\nu)}{16}$.
\end{proof}

\begin{lem}\label{l:boundintL*}
For $\alpha\geq\alpha_0=d-\frac{5}{4}$ we have
\begin{eqnarray*}
\frac{1}{T}\int_{-T}^T\log|L^*(\alpha+it)|dt\ll \min\{\log\left(\tfrac{1}{\alpha-\alpha_0}\right),\log\log(T)\}
\end{eqnarray*}
\end{lem}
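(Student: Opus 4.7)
The plan is to combine Jensen's inequality with the mean-square bound of Proposition \ref{l:boundintf}. By concavity of the logarithm,
$$\frac{1}{T}\int_{-T}^{T}\log|L^{*}(\alpha+it)|\,dt\le\frac{1}{2}\log\!\left(\frac{1}{T}\int_{-T}^{T}|L^{*}(\alpha+it)|^{2}\,dt\right),$$
so it suffices to establish the mean-square estimate
$$\frac{1}{T}\int_{-T}^{T}|L^{*}(\alpha+it)|^{2}\,dt\ll \min\{(\alpha-\alpha_{0})^{-2},\,\log^{2}T\}.$$

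Next, combining the factorization \eqref{e:ftophi} with $\vphi(s)=\det(\phi_{ij}(s))$ and $\phi_{ij}(s)=c_{j}(\Gamma(s-\tfrac{d-1}{2})/\Gamma(s))L_{ij}(s)$, the gamma ratios in $L^*$ cancel with those in each $\phi_{ij}$. Hence $L^*(s)$ is, up to a factor of modulus $O(1)$, a signed sum over permutations $\sigma\in S_\kappa$ of products $M_{\sigma}(s):=\prod_{i=1}^\kappa L_{i,\sigma(i)}(s)$. By Cauchy--Schwarz on the sum over $\sigma$,
$$|L^{*}(\alpha+it)|^{2}\ll\sum_{\sigma\in S_{\kappa}}|M_{\sigma}(\alpha+it)|^{2},$$
and each $M_\sigma$ is itself a Dirichlet series with positive coefficients, converging for $\Re(s)>d-1$ with a pole of order $\kappa$ at $s=d-1$.

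To apply Proposition \ref{l:boundintf} to $M_\sigma$ one must compute the correct growth exponent $r$. By the Maass--Selberg bound \eqref{e:vphibound} and Stirling's formula, each factor satisfies $L_{ij}((d-1)/2+it)\ll|t|^{(d-1)/2}$ on the critical line, while $L_{ij}(s)\ll1$ for $\Re(s)>d-1$. Phragm\'en--Lindel\"of in the strip $0\le\Re(s)\le d-1$ then yields $|L_{ij}(it)|\ll|t|^{d-1}$. Rescaling $s\mapsto(d-1)s$ so the critical line becomes $\Re(s)=1/2$ places each $L_{ij}$ in the setting of Proposition \ref{l:boundintf} with $r=d-1$, giving the threshold $\sigma_{1}=(4d-5)/(4d-4)$, which corresponds to $\alpha_{0}=d-5/4$ in the original variable.

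The principal technical obstacle is to retain this per-factor exponent $r=d-1$ in the final bound for $M_\sigma$, since a direct Phragm\'en--Lindel\"of argument for the full product would give the weaker exponent $r=\kappa(d-1)$ and a threshold worsening with $\kappa$. This is resolved by rerunning the argument of Proposition \ref{l:boundintf} directly for $M_\sigma$: the contour-integral estimates of Lemma \ref{l:Ax} are controlled factorwise using the convexity bound $L_{ij}(\sigma+it)\ll|t|^{(d-1)(1-\sigma)}\log|t|$ on each $L_{ij}$ rather than the cruder pointwise bound on the whole product, and the order-$\kappa$ pole at $s=d-1$ only contributes extra polylogarithmic factors harmless for the final estimate. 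Summing over the $\kappa!$ permutations and invoking Jensen then produces the stated logarithmic bound.
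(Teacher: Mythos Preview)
Your opening Jensen step is shared with the paper, but the route diverges immediately afterward and the detour creates a gap you have not closed.

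The paper does \emph{not} attempt a mean-square bound on $L^*$ itself. Instead it bounds the determinant pointwise by Hadamard's inequality,
\[
|L^*(\alpha+it)|\le C\left(\tfrac{1}{\kappa}\sum_{i,j}|L_{ij}(\alpha+it)|^2\right)^{\kappa/2},
\]
takes logarithms, and \emph{then} applies Jensen. The quantity inside the resulting logarithm is simply $\tfrac{1}{T}\int\sum_{i,j}|L_{ij}|^2$, a finite sum of second moments of the individual entries $L_{ij}$. Each of these is handled directly by Proposition~\ref{l:boundintf} with $r=\tfrac{d-1}{2}$ (via the rescaling $f(s)=L_{ij}(\tfrac{d-1}{2}(s+1))$), giving the threshold $\alpha_0=d-\tfrac{5}{4}$ with no dependence on $\kappa$.

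Your expansion into permutation products $M_\sigma$ forces you to control $\tfrac{1}{T}\int|M_\sigma|^2$, and here the difficulty you identify is real and your proposed fix does not resolve it. The contour-shifting in Lemma~\ref{l:Ax} requires a pointwise bound on $M_\sigma(\epsilon+it)$ along a single vertical line; ``factorwise convexity'' there still gives $|M_\sigma(\epsilon+it)|\le\prod_i|L_{i,\sigma(i)}(\epsilon+it)|\ll|t|^{\kappa(d-1)(1-\epsilon)}$, so the exponent $\kappa(d-1)$ reappears and the threshold degrades with $\kappa$. The higher-order pole at $s=d-1$ also changes the main term of $A_{M_\sigma}(x)$ to $x(\log x)^{\kappa-1}$, which feeds into the inner-sum estimate in the proof of Proposition~\ref{l:boundintf} and is not obviously harmless. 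Separately, your use of Phragm\'en--Lindel\"of to obtain $|L_{ij}(it)|\ll|t|^{d-1}$ is extrapolation, not interpolation: you have bounds at $\Re(s)=\tfrac{d-1}{2}$ and $\Re(s)\ge d-1$, which says nothing about $\Re(s)=0$.

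The moral is that the order of operations matters: bound the determinant pointwise by a power of $\sum_{i,j}|L_{ij}|^2$ \emph{before} integrating, so that after Jensen only the second moments of the individual entries appear.
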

\begin{proof}
First, from \eqref{e:phitoL} and \eqref{e:vphibound} we get that for $|t|<1$ and $\tfrac{d-1}{2}<\alpha<d$
$$\log|L^*(\alpha+it)|\ll 1+\log(1+\frac{1}{t}),$$
so it is enough to bound
$\int_1^T \log|L^*(\alpha+it)|dt$.
Next, since $L(s)=\det(L_{ij}(s))$ we get the bound
$$|L^*(\alpha+it)|\leq \frac{b^{2\alpha+1-d}}{a}\left(\frac{1}{\kappa}\sum_{i,j}|L_{ij}(\alpha+it)|^2\right)^{\kappa/2},$$
and from the inequality between geometric and arithmetic mean we get
\begin{eqnarray}\label{e:boundintL*1}
\frac{1}{T}\int_1^T \log(|L^*(\alpha+it)|)dt\leq (2\alpha+1-d)\log b-\log a \\ \nonumber+\frac{\kappa}{2}\log\left(\frac{1}{T}\int_1^T \frac{1}{\kappa}\sum_{i,j}|L_{ij}(\alpha+it)|^2dt\right).
\end{eqnarray}

For each pair $(i,j)$ consider
$f(s)=L_{ij}(\tfrac{d-1}{2}(s+1))$. This function is still given by a Dirichlet series with positive coefficients, it converges absolutely for $\Re(s)>1$ with a simple pole at $s=1$, it has an analytic continuation to $\Re(s)\geq 0$ except perhaps finitely many poles in $(0,1)$, and it satisfies that $|f(\sigma+it)|=O(t^{\frac{d-1}{2}})$ for $\sigma\geq 0$ and $t>1$ (by \eqref{e:phiLij} and \eqref{e:vphibound}). We can thus apply Proposition \ref{l:boundintf} with $r=\tfrac{d-1}{2}$ to $L_{ij}(\tfrac{d-1}{2}(s+1))$ and get that for $\alpha\geq \alpha_0$,
$$\frac{1}{T}\int_1^T|L_{ij}(\alpha+it)|^2dt\ll \min\{\tfrac{1}{(\alpha-\alpha_0)^2},\log^2(T)\}.$$
Using this bound with \eqref{e:boundintL*1} gives
\begin{eqnarray*}
\frac{1}{T}\int_1^T \log(|L^*(\alpha+it)|)dt
&\ll&\min\{\log(\tfrac{1}{\alpha-\alpha_0}),\log\log(T)\}\\
\end{eqnarray*}
as claimed.
\end{proof}

\begin{lem}\label{l:Arg}
For any $\alpha>\frac{d-1}{2}$ we have
$$\int_\alpha^\infty\mathrm{arg}(L^*(\sigma+iT))d\sigma =O(\log(T)).$$
\end{lem}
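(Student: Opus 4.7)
The plan is as follows. First, split the integral at a fixed threshold $M$. Choose $M > d-1$ large enough that the estimate \eqref{e:fboundsigma} ensures $|L^*(\sigma + it) - 1| \leq 1/2$ for all $\sigma \geq M$ and all $t \in \bbR$; since then $|\arg L^*(\sigma+iT)| \leq \pi|L^*(\sigma+iT) - 1| = O(e^{-c\sigma})$, the tail contribution $\int_M^\infty \arg L^*(\sigma+iT)\,d\sigma$ is already $O(1)$. It therefore suffices to prove the pointwise bound $|\arg L^*(\sigma+iT)| \ll \log T$ uniformly for $\sigma \in [\alpha, M]$, which integrated over this fixed-length interval gives the claimed $O(\log T)$.

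To obtain the pointwise bound I would use the classical Backlund--Selberg trick. Because the Dirichlet coefficients of $L^*(s)$ are real, $\overline{L^*(\bar s)} = L^*(s)$, so the auxiliary function
\[F(z) := \tfrac{1}{2}\bigl(L^*(z + iT) + L^*(z - iT)\bigr)\]
is meromorphic in $\Re z > \tfrac{d-1}{2}$ (with the finitely many real poles inherited from $L^*$), is real-valued on the real axis, and in fact $F(u) = \Re L^*(u+iT)$ for real $u$. Between two consecutive sign changes of $\Re L^*(u+iT)$ the value $L^*(u+iT)$ remains in an open half-plane, so the continuous branch of $\arg L^*(u+iT)$ varies by less than $\pi$. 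Hence
\[|\arg L^*(\sigma+iT)| \leq (N+1)\pi + |\arg L^*(M+iT)|,\]
where $N$ is the number of real zeros of $F$ on $[\alpha, M]$; the last term is $O(1)$ by the choice of $M$.

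The bound $N = O(\log T)$ would then follow from Jensen's formula applied to $F$ on the disk $|z - M| \leq R + \delta$ with $R = M - \alpha$ and $\delta = \tfrac{1}{2}(\alpha - \tfrac{d-1}{2}) > 0$. The outer disk lies in $\{\Re z > \tfrac{d-1}{2}\}$ by the choice of $\delta$, and for $T$ large the translates $z \pm iT$ satisfy $|\Im(z\pm iT)| \geq T - (R+\delta) \gg 1$, so \eqref{e:fboundt} gives $|F(z)| \ll T^{(d-1)\kappa/2}$ throughout the disk; at the center $|F(M)| \geq 1/2$ by the choice of $M$. After dividing $F$ by a polynomial removing its finitely many real poles in the disk (contributing only $O(1)$ to the final count of sign changes), Jensen's formula in the form
\[N \cdot \log\!\left(1 + \frac{\delta}{R}\right) \leq \log\frac{\max_{|z - M| = R + \delta}|F(z)|}{|F(M)|} \ll \log T\]
yields $N \ll \log T$.

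The main technical obstacle is balancing the Jensen geometry: the outer disk must simultaneously contain the real segment $[\alpha, M]$, stay within $\{\Re z > \tfrac{d-1}{2}\}$ (where $L^*(z\pm iT)$ is analytic), and be centered where $|F|$ is bounded below by a positive constant. The strict inequality $\alpha > \tfrac{d-1}{2}$ provides exactly the slack needed through the choice of $\delta$, with the implicit constant in the final $O(\log T)$ bound allowed to depend on $\alpha$.
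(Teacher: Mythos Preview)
Your proposal is correct and follows the same approach as the paper: split off the tail at a fixed abscissa using \eqref{e:fboundsigma}, then bound $\arg L^*(\sigma+iT)$ pointwise by $O(\log T)$ on the remaining finite segment. The paper simply cites Titchmarsh's \cite[Lemma~9.2]{Titchmarsh86} for that pointwise bound, whereas you spell out its proof via the Backlund trick and Jensen's formula; one minor slip is your remark that $F$ inherits ``finitely many real poles'' from $L^*$---for $T\neq 0$ the poles of $L^*(z\pm iT)$ lie at $\Im z=\mp T$, so $F$ is already analytic on your disk for $T$ large and the pole-removal step is unnecessary.
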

\begin{proof}
Since $L^*(\sigma+iT)=1+O(e^{-c\sigma})$ as $\sigma\to\infty$ we have $\mathrm{arg}(L^*(\sigma+iT))\ll e^{-c\sigma}$, so for $\sigma_1>d$ large enough (but fixed), we have
$$\int_\alpha^\infty\mathrm{arg}(L^*(\sigma+iT))d\sigma=\int_\alpha^{\sigma_1}\mathrm{arg}(L^*(\sigma+iT))d\sigma+O(1).$$
For the remaining integral, using the bound $|L^*(\sigma+iT)|=O(T^{\tfrac{d-1}{2}})$ and Titchmarsh's \cite[Lemma 9.2]{Titchmarsh86} we get that $\mathrm{arg}(L^*(\sigma+iT))=O(\log(T))$ and hence the whole integral is bounded by $O(\log(T))$.
\end{proof}
We can now prove Theorem \ref{t:ZeroDist1}, the argument is almost identical to Selberg's \cite{Selberg90} and we include it for the sake of completeness.
\begin{proof}[Proof of Theorem \ref{t:ZeroDist1}]
The zeroes and poles of $\vphi(s)$ in $\Re(s)>\tfrac{d-1}{2}$ are the same as the zeroes and poles of $L^*(s)$ in $\Re(s)>\tfrac{d-1}{2}$. By Proposition \ref{p:L*}, $L^*(s)$ satisfies all the assumptions needed for \cite[Lemma 1,2]{Selberg90}, stating that for any $\alpha\geq\tfrac{d-1}{2}$,
\begin{eqnarray}\label{e:variation}
\mathop{\sum_{|\gamma|\leq T}}_{\beta>\alpha}(T-|\gamma|)(\beta-\alpha)=\frac{1}{2\pi}\int_{-T}^T(T-|t|)\log|L^*(\alpha+it)|dt\\
\nonumber +T\sum_{\sigma_j>\alpha}(\sigma_j-\alpha)+O(\log(T)),
\end{eqnarray}
where the last sum is over the finitely many poles in $(\tfrac{d-1}{2},d]$.
Let $F(\alpha,T)$ denote the left hand side of \eqref{e:variation}, and let
\begin{equation}\label{e:F1}
F_1(\alpha,T)=\mathop{\sum_{|\gamma|\leq T}}_{\beta>\alpha}(\beta-\alpha).
\end{equation}
One easily sees that
\begin{equation}\label{e:diff}
F(\alpha, T)-F(\alpha, T-1)\leq F_1(\alpha,T)\leq F(\alpha,T+1)-F(\alpha,T).
\end{equation}
From  \eqref{e:variation} with $\alpha=\tfrac{d-1}{2}$, together with Lemma \ref{l:intL*half} we get
\begin{eqnarray*}
F(\tfrac{d-1}{2};T)=\tfrac{\kappa(d-1)}{4\pi}T^2\log T+B_\G T^2+(C_\G+\sum_{\sigma_j>\alpha}(\sigma_j-\tfrac{d-1}{2})) T +O(\log(T)),
\end{eqnarray*}
which together with \eqref{e:diff} implies that
$$F_1(\tfrac{d-1}{2};T)=\tfrac{\kappa(d-1)}{2\pi}T\log T+A_\G T+O(\log(T)),$$
with $A_\G=2(C_\G+\sum_{\sigma_j>\alpha}(\sigma_j-\tfrac{d-1}{2}))+B_\G$. This confirms \eqref{e:ZeroDist1}

Next, to show \eqref{e:ZeroDist2}, we use
Littlewood's formula
\begin{eqnarray*}\label{e:Littlewood}
\mathop{\sum_{|\gamma|\leq T}}_{\beta>\alpha}(\beta-\alpha)&=&\frac{1}{2\pi}\int_{-T}^T\log|L^*(\alpha+it)|dt+\frac{1}{\pi}\int_\alpha^\infty\mathrm{arg}(L^*(\sigma+iT))d\sigma\\
\nonumber &&+\sum_{\sigma_j>\alpha}(\sigma_j-\alpha).\\
\end{eqnarray*}
For $\alpha\geq\alpha_0$, we use Lemma  \ref{l:boundintL*} to bound the first integral, Lemma \ref{l:Arg} to bound the second integral, and bound the sum over the poles by $O(1)$ to get that
\begin{eqnarray*}
\mathop{\sum_{|\gamma|\leq T}}_{\beta>\alpha}(\beta-\alpha)\ll T\min\{\log(\tfrac{1}{\alpha-\alpha_0}),\log\log(T)\})
\end{eqnarray*}
confirming \eqref{e:ZeroDist2}.
\end{proof}


\appendix
\section{Scattering determinant as a Dirichlet series}
In this section we verify the formula \eqref{e:phiLij}, expressing the scattering coefficients as Dirichlet series with positive coefficients.
Recall the definition of the Eisenstein series, it's constant terms, and the scattering coefficients given in \eqref{e:EisensteinSph},\eqref{e:ConstSph}, and \eqref{e:ConstSph1}. 
We will show
\begin{prop}\label{p:scatterDirichlet}
The coefficients of the scattering matrix
can be written as
\begin{equation}\label{e:phiLijA}
\phi_{ij}(s)=c_{j} \frac{\Gamma(s-\frac{d+1}{2})}{\Gamma(s)}L_{ij}(s),
\end{equation}
where $L_{ij}(s)$ is a Dirichlet series of the form
\begin{equation}\label{e:LijA}
L_{ij}(s)=\sum_{n=0}^\infty \frac{a_{ij}(n)}{\lambda_{ij}(n)^s},
\end{equation}
with $a_{ij}(n)\in \bbN$ and $\lambda_{ij}(n)>0$,
converging in the half plane $\Re(s)>d-1$ with a simple pole at $s=d-1$.
\end{prop}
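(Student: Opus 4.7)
My plan is to compute $E_{ij}(s,z)$ explicitly via a Bruhat-cell expansion of the double-coset sum $\Gamma_{\!P_i}\bs\Gamma/\Gamma_{\!P_j}$, adapting the classical Maass--Selberg calculation for $\SL_2(\bbZ)$ to the general rank-one setting. The neat cusp hypothesis $\Gamma\cap P_i=\Gamma\cap N_i$ (and likewise at $j$) allows the double-coset sum to split cleanly into the trivial coset, which contributes $\delta_{ij}y_j(z)^s$, together with a set $\calD_{ij}$ of ``big cell'' representatives $\delta\notin P_i$. Inserting this into \eqref{e:ConstSph} and unfolding the inner sum over $\Gamma_{N_j}$ against the integral over $\Gamma_{N_j}\bs N_j$ converts the period integral into an integral over all of $N_j$, giving
\[
\phi_{ij}(s)\,y_j(z)^{d-1-s}=\frac{1}{v_j}\sum_{\delta\in\calD_{ij}}\int_{N_j}y_i(\delta nz)^{s}\,dn.
\]

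For each $\delta\in\calD_{ij}$ I would then apply the Langlands/Bruhat decomposition, conjugating into the standard parabolic and writing $k_i^{-1}\delta k_j=n_1\,a(c_\delta)\,w\,n_2$, where $w$ is the nontrivial Weyl element and $c_\delta\in(0,\infty)$ is a positive invariant of the double coset. A direct computation using $y(wn)=(1+|n|^2)^{-1}$ and the scaling action of $A$ on the upper half space reduces the inner integral to the Euclidean beta-type integral $\int_{\bbR^{d-1}}(1+|n|^2)^{-s}\,dn$, producing precisely the gamma factor appearing in \eqref{e:phiLijA} times $c_\delta^{-2s}y_j(z)^{d-1-s}$. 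Summing over $\delta\in\calD_{ij}$ and grouping representatives by the common value of $c_\delta$ yields $L_{ij}(s)=\sum_n a_{ij}(n)\lambda_{ij}(n)^{-s}$ with $\lambda_{ij}(n)=c_\delta^{2}$ and $a_{ij}(n)\in\bbN$ counting the number of big-cell representatives having the $n$-th value of $c_\delta$.

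The remaining assertions about $L_{ij}(s)$ then come essentially for free: absolute convergence on $\Re(s)>d-1$ is inherited from the absolute convergence of the defining series for $E_i(s,z)$ on the same half-plane, and the simple pole at $s=d-1$ reflects the classical simple pole of $E_i(s,z)$ with residue a positive multiple of $v_j/\vol(X_\Gamma)$. The main technical obstacle is justifying the parametrization itself: one needs to know that $\{c_\delta\}\subset(0,\infty)$ has finite local multiplicity and is discrete away from zero, so that one actually has a Dirichlet series with positive integer coefficients. This discreteness and the finiteness of each multiplicity follow from the proper discontinuity of $\Gamma$ on $G$ together with the fact that, once $c_\delta$ is fixed, the double coset $\Gamma_{N_i}\delta\Gamma_{N_j}$ is pinned down by a lattice point in $N_i\times N_j$; this is precisely where the neat cusp hypothesis plays its key role, ensuring that the Bruhat parameters of $\delta$ are not obstructed by torsion or by a nontrivial $\Gamma\cap M A$ component.
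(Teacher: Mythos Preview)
Your proposal is correct and follows essentially the same approach as the paper: split the double-coset sum into the parabolic part (yielding $\delta_{ij}y_j^s$) and the big-cell part, unfold the $\Gamma_{N_j}$-sum against the integral over $\Gamma_{N_j}\bs N_j$, evaluate the resulting Euclidean integral $\int_{\bbR^{d-1}}(1+|x|^2)^{-s}\,dx$ to extract the gamma factor, and group the remaining sum by a positive double-coset invariant. The only cosmetic difference is that the paper derives the needed formula for $y_i(\delta n z)$ by an explicit coordinate computation in the hyperboloid model (its Lemma~\ref{l:HypAction}, giving the invariant $\lambda(\gamma)$), whereas you phrase the same computation via the Bruhat decomposition $k_i^{-1}\delta k_j=n_1\,a(c_\delta)\,w\,n_2$; the invariants $\lambda(\gamma)$ and $c_\delta^{2}$ play identical roles.
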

We follow the same proof given in \cite{SelbergHarmonic} for hyperbolic surfaces, that is, we explicitly compute the terms $y_i(\gamma.z)^s$ appearing in \eqref{e:EisensteinSph}, and then integrate along the cusps. We will need the following explicit formula for the hyperbolic action.
\begin{lem}\label{l:HypAction}
Let $G\cong\SO_0(d,1)$ denote the group of isometries of hyperbolic space $\Hd$. For any $g\in G$ there is $\lambda=\lambda(g)\geq 0$ such that for any $z=(x,y)\in \Hd$
\begin{equation}\label{e:HypAction}
y(z)=y(g.z)\left\lbrace\begin{array}{cc} \lambda\big(y^2+\norm{x+\eta}^2\big) & \lambda>0\\
 \alpha & \lambda=0\end{array}\right.,
 \end{equation}
for some $\eta\in \bbR^{d-1}$ and $\alpha>0$ (depending only on $g$).
\end{lem}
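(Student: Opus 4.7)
The plan is to split into two cases based on whether $g$ fixes the boundary point at infinity in the upper half space model of $\Hd$, and in each case to exhibit $g$ as an explicit composition of standard generators of $G\cong\SO_0(d,1)$.

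First suppose $g(\infty)=\infty$. Then $g$ lies in the stabilizer of $\infty$, which in this model is the parabolic subgroup $P=NAM$ acting on the upper half space as $g.(x,y)=(\alpha R x + v,\alpha y)$ for some $\alpha>0$, $R\in O(d-1)$ and $v\in\bbR^{d-1}$. Hence $y(g.z)=\alpha y$, so $y(z)=\alpha^{-1}y(g.z)$, which is the $\lambda=0$ case of the lemma with positive constant $\alpha^{-1}$.

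Next suppose $g(\infty)\neq\infty$, and let $p=g^{-1}(\infty)\in\bbR^{d-1}$. Writing $T_{-p}(z)=z-p$ and letting $\iota$ denote the inversion $(x,y)\mapsto(x,y)/(\norm{x}^2+y^2)$ about the unit sphere centered at the origin, one checks that $\iota\circ T_{-p}$ sends $p$ to $\infty$. Therefore $h:=g\circ(\iota\circ T_{-p})^{-1}$ fixes $\infty$, so by the previous case $h\in P$, and in particular $h$ scales the $y$-coordinate by some $\alpha>0$. A direct computation gives $\iota(z-p)=(x-p,y)/(\norm{x-p}^2+y^2)$, so
$$y(g.z)=y(h.\iota(z-p))=\frac{\alpha y}{\norm{x-p}^2+y^2}.$$
Setting $\eta=-p$ and $\lambda=\alpha^{-1}>0$, this rearranges to $y(z)=y(g.z)\cdot\lambda(y^2+\norm{x+\eta}^2)$, which is the $\lambda>0$ case.

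The only nontrivial input is the decomposition $g=h\circ\iota\circ T_{-p}$ used in the second case; this is essentially the Bruhat decomposition $G=P\sqcup P\iota P$ and follows from the transitivity of the $G$-action on the boundary sphere $\partial\Hd$. Granted that, the rest is a straightforward calculation with explicit actions of the standard generators, and the main care required is simply fixing conventions so that the actions of $N$, $A$, $M$ as translations, dilations and rotations in the upper half space model are consistent with the ones used elsewhere in the paper.
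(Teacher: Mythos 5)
Your proof is correct, but it takes a genuinely different route from the paper. The paper works in the hyperboloid model $\Ld$: it writes $g$ as a matrix $A=(a_{i,j})\in\SO_0(d,1)$, transports the linear action to the upper half space via the explicit isometry $\iota:\Ld\to\Hd$, computes $\tilde\xi_0+\tilde\xi_d$ as an explicit quadratic expression in $(x,y)$, completes the square, and uses the defining relation $A^tJA=J$ to verify the identity $(\alpha_d+\alpha_0)(\alpha_d-\alpha_0)=\sum_j\alpha_j^2$ that kills the constant term; the two cases $\lambda>0$ and $\lambda=0$ correspond to $\alpha_0\neq\alpha_d$ and $\alpha_0=\alpha_d$. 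You instead stay entirely in the upper half space model and use the Bruhat-type decomposition $G=P\sqcup P\,\iota\,P$ together with the known form of the stabilizer of $\infty$ (Euclidean similarities under the Poincar\'e extension) and the conformal inversion. Your argument is shorter and more conceptual, at the cost of importing two standard facts (transitivity on the boundary and the description of $\mathrm{Stab}(\infty)$) that the paper's computation avoids by being self-contained; the paper's version also exhibits $\lambda$ and $\eta$ explicitly in terms of the matrix entries of $g$. One small convention point in your write-up: the inversion $\iota(x,y)=(x,y)/(\norm{x}^2+y^2)$ is an orientation-reversing isometry, so it lies in the full isometry group rather than in $\SO_0(d,1)$; this is harmless, since then $h$ is also orientation-reversing and still lies in the stabilizer of $\infty$ of the full isometry group, which acts as $(x,y)\mapsto(\alpha Rx+v,\alpha y)$ with $R\in O(d-1)$, and only the relation $y(h.w)=\alpha\, y(w)$ is used. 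You correctly flag that such conventions are the only care needed.
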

\begin{proof}
We start with the hyperboloid model for hyperbolic space
\begin{equation}\label{e:HmodelL}
\mathbb{L}^d=\{(\xi_0,\xi_1,\ldots,\xi_d)| \xi_0^2+\ldots \xi_{d-1}^2-\xi_{d}^2=-1,\; \xi_{d}>0\}.
\end{equation}
In this model the group of isometries is just the group of linear  maps sending $\mathbb{L}^d$ onto itself (see  \cite[Section 7]{CannonFloydKenyonParry97} for  more details on the various models of hyperbolic space and the action of the group of isometries on each).
Explicitly, this is the identity component of the group
\begin{equation}\label{e:SOd1}
\SO(d,1)=\{A\in \SL_{d+1}(\bbR)|A^tJA=J=AJA^t\},
\end{equation}
with $J=\mathrm{diag}(1,\dots,1,-1)$, acting linearly on $\mathbb{L}^d\subseteq \bbR^{d+1}$.

In order to see how this action looks like in the upper half space model
we use the isometry $\iota:\mathbb{L}^d\to \Hd$ given by
\begin{equation}\label{e:IsoLH}
\iota(\xi_0,\ldots,\xi_d)=(\frac{2\xi_1}{\xi_0+\xi_d},\ldots, \frac{2\xi_{d-1}}{\xi_0+\xi_d},\frac{2}{\xi_0+\xi_d})=(x,y),
\end{equation}
with inverse map given by
\begin{equation}\label{e:IsoHL}
\iota^{-1}(x,y)=(\tfrac{1-\frac{1}{4}(y^2+\sum_j x_j^2)}{y},\tfrac{x_1}{y},\ldots,\tfrac{x_{d-1}}{y},\tfrac{1+\tfrac{1}{4}(y^2+\sum_j x_j^2)}{y}).
\end{equation}

Now fix some $g\in \mathrm{Isom}(\Hd)$  and let $A\in \SO_0(d,1)$ denote the corresponding linear map acting on $\Ld$. Given $z=(x,y)\in \Hd$ let $\xi=\iota^{-1}(z)\in \Ld$ and $\tilde{\xi}=A\xi$ so that $\iota(\tilde{\xi})=(\tilde{x},\tilde{y})=g.z$. Using \eqref{e:IsoLH} we have that
$$\tilde{y}=\frac{2}{\tilde{\xi}_0+\tilde{\xi}_d},\quad \tilde{x}_j=\tilde{\xi_j}\tilde{y},$$
and since $A=(a_{i,j})$ acts linearly we can write
$$\tilde{\xi}_0+\tilde{\xi}_d=\sum_{j=0}^d \alpha_j\xi_j, \mbox{ with } \alpha_j=a_{0,j}+a_{d,j}.$$
Now use \eqref{e:IsoHL} to rewrite $\xi_j$ back in terms of $(x,y)$ to get
$$\tilde{\xi}_0+\tilde{\xi}_d=\frac{\alpha_0+\alpha_d}{y}+\frac{\alpha_0-\alpha_d}{4y}(y^2+\sum_{j=1}^{d-1}x_j^2)+\sum_{j=1}^{d-1} \frac{\alpha_j x_j}{y}.$$

We first, consider the case where $\alpha_0\neq \alpha_d$ and we define $\lambda=\frac{\alpha_d-\alpha_0}{8}$ and $\eta_j=\frac{2\alpha_j}{\alpha_d-\alpha_0}$. Completing the squares we get
$$
\tilde{\xi}_0+\tilde{\xi}_d=\frac{1}{y}\left[(\alpha_d+\alpha_0)-\frac{1}{\alpha_d-\alpha_0}\sum_{j=1}^{d-1}\alpha_j^2+2\lambda\left(y^2+\sum_{j=1}^{d-1}(x_j+\eta_j)^2\right) \right].
$$
A direct computation using the fact that $A\in \SO(d,1)$ shows that
$$(\alpha_d+\alpha_0)=\frac{1}{\alpha_d-\alpha_0}\sum_{j=1}^{d-1}\alpha_j^2,$$
hence,
$$
\tilde{y}=\frac{2}{\tilde{\xi}_0+\tilde{\xi}_d}=\frac{y}{\lambda\left(y^2+\norm{x+\eta}^2\right)}.
$$
Note that $\lambda$ depends only on $A$ (and hence only on $g$) and not on $(x,y)$, in particular, the fact that $\tilde{y}>0$ implies that $\lambda>0$ as well.

Next, consider the case where $\alpha_0= \alpha_d=\alpha$ so that
$$\tilde{\xi}_0+\tilde{\xi}_d=\frac{2\alpha}{y}+\sum_{j=1}^{d-1} \frac{\alpha_j x_j}{y}.$$
Using again that $\tilde{y}>0$ we must have that $\alpha>0$ and that $\alpha_j=0$ for $j=1,\ldots, d-1$, (otherwise one can always choose $x_j$ to make $\tilde{y}=\frac{2}{\tilde{\xi}_0+\tilde{\xi}_d}<0$). We thus see that $\tilde{\xi}_0+\tilde{\xi}_d=\frac{2\alpha}{y}$, hence, $\tilde{y}=y/\alpha$ concluding the proof.
\end{proof}


With this formula we can compute the scattering matrix and express it in terms of Dirichlet series following the same argument as \cite{SelbergHarmonic}.
\begin{proof}[Proof of Proposition \ref{p:scatterDirichlet}]
We first consider the top left coefficient $\phi_{11}$, given by
 \begin{equation}\label{e:constant}
\frac{1}{\vol(\Gamma_N\bs N)}\int_{\Gamma_N\bs N}E_1(s,n.z)dn=y^s+\phi_{11}(s)y^{d-1-s}.
\end{equation}
We note that $\tau\in \Gamma_N$ acts on $z=(x,y)\in \Hd$ via $(x,y)\mapsto(x+u_\tau,y)$ and that the set,
$L_1\subset \bbR^{d-1}$, of all $u_\tau$'s occurring in this way is a lattice in $\bbR^{d-1}$. Equation \eqref{e:constant} can be written explicitly as
 \begin{equation}\label{e:constant1}\
\frac{1}{v_1}\int_{\calF_1}E(s,(x,y))dx=y^s+\phi_{11}(s)y^{d-1-s},
\end{equation}
where $\calF_1\subseteq \bbR^{d-1}$ is a fundamental domain for $L_1\bs \bbR^{d-1}$ and $v_1=\vol(\calF_1)$.
Using the explicit formula \eqref{e:HypAction} for $y(\gamma.z)$ we can compute this integral directly (for $\Re(s)>d-1$) as
\begin{eqnarray}\label{e:explicit1}\nonumber
\int_{\calF_{1}}E(s,(x,y))dx
 &=& y^s v_1\big(\mathop{\sum_{\gamma\in \Gamma_P\bs \Gamma}}_{\lambda(\gamma)=0}\frac{1}{\alpha(\gamma)}\big)\\
&&+\mathop{\sum_{\gamma\in \Gamma_P\bs \Gamma}}_{\lambda(\gamma)>0}\frac{y^s}{\lambda(\gamma)^s}\int_{\calF_1}\frac{dx}{(y^2+\norm{x+\eta}^2)^s}
\end{eqnarray}
For any $\gamma\in \Gamma_P\bs \Gamma$ with $\lambda(\gamma)>0$ and $\tau\in \Gamma_N$ we have that $\gamma$ and $\gamma\tau$ are distinct classes in $\Gamma_P\bs \Gamma$ with $\lambda(\gamma)=\lambda(\gamma\tau)$. Indeed, a direct computation using \eqref{e:HypAction} shows that
$y(\gamma.z)=y(\gamma\tau.z)$ for all $z$ if and only if $\tau=1$, hence, $\gamma\tau\gamma^{-1}\not\in \Gamma_P$ when $\tau\neq 1$;
To see that $\lambda(\gamma)=\lambda(\gamma\tau)$ use \eqref{e:HypAction} to get the identity
\begin{eqnarray*}
\frac{1}{\lambda(\gamma \tau)}&=&y(z)\cdot y(\gamma\tau.z)=\frac{y^2}{\lambda(\gamma)(y^2+\norm{x+u_\tau+\eta(\gamma)}^2)}\\
&=&\frac{1}{\lambda(\gamma)(1+y^{-2}\norm{x+u_\tau+\eta(\gamma)}^2)}
\end{eqnarray*} and take $y\to\infty$.

We can thus write the second sum in \eqref{e:explicit1} as
\begin{eqnarray*}
 \mathop{\sum_{\gamma\in \Gamma_P\bs \Gamma/\Gamma_N}}_{\lambda(\gamma)>0}\sum_{u\in L_1}\frac{y^s}{\lambda(\gamma)^s}\int_{\calF_1}\frac{dx}{(y^2+\norm{x+u+\eta}^2)^s}\\
 =\mathop{\sum_{\gamma\in \Gamma_P\bs \Gamma/\Gamma_N}}_{\lambda(\gamma)>0}\frac{y^s}{\lambda(\gamma)^s}\int_{\bbR^{d-1}}\frac{dx}{(y^2+\norm{x}^2)^s}\\
 =\mathop{\sum_{\gamma\in \Gamma_P\bs \Gamma/\Gamma_N}}_{\lambda(\gamma)>0}\frac{y^{d-1-s}}{\lambda(\gamma)^s}\int_{\bbR^{d-1}}\frac{dx}{(1+\norm{x}^2)^s}\\
=c(d)\frac{\Gamma(s-\frac{d-1}{2})}{\Gamma(s)}\mathop{\sum_{\gamma\in \Gamma_P\bs \Gamma/\Gamma_N}}_{\lambda(\gamma)>0}\frac{y^{d-1-s}}{\lambda(\gamma)^s}\\
\end{eqnarray*}
Consider the set
$$\Lambda_{11}=\{\lambda(\gamma)|\gamma\in \Gamma_P\bs\Gamma/\Gamma_N\}\cap (0,\infty),$$
from the discreteness of $\G$ we get that $\Lambda_{11}\subset (0,\infty)$ is discrete and we can order it
\begin{eqnarray*}
\Lambda_{11}
=\{\lambda_{11}(0)<\lambda_{11}(1)<\lambda_{11}(2)<\cdots\},\end{eqnarray*}
and let
$$a_{11}(n)=\#\{\gamma\in \Gamma_P\bs\Gamma/\Gamma_N|\lambda(\gamma)=\lambda_{11}(n)\}.$$
With these notation we get that
\begin{equation}\label{e:explicit2}
\frac{1}{v_1}\int_{\calF_1}E_1(s,z)dx= y^s \big(\mathop{\sum_{\gamma\in \Gamma_\infty\bs \Gamma}}_{\lambda(\gamma)=0}\frac{1}{\alpha(\gamma)}\big)+
\frac{c(d)}{v_1}\frac{\Gamma(s-\frac{d-1}{2})}{\Gamma(s)}\sum_{n=0}^\infty \frac{a_{11}(n)}{\lambda_{11}(n)^s}
\end{equation}
Comparing \eqref{e:constant1} with \eqref{e:explicit2} we see that
$$\mathop{\sum_{\gamma\in \Gamma_\infty\bs \Gamma}}_{\lambda(\gamma)=0}\frac{1}{\alpha(\gamma)}=1,$$
and that
$$\phi_{11}(s)=c_1\frac{\Gamma(s-\frac{d-1}{2})}{\Gamma(s)}L_{11}(s),$$
with $c_1=\frac{c(d)}{v_1}$.
Finally, since the Eisenstein series $E_1(s,z)$ absolutely converges for $\Re(s)>d-1$ and has a simple pole at $s=d-1$, the series $L_{11}(s)$ also absolutely converges in this region and has a simple pole at $s=d-1$.

The formula for the other coefficients $\phi_{ij}(s)$ follows from the same arguments where we denote by
$\lambda_{ij}(\gamma)=\lambda(k_i^{-1}\gamma k_j)$ and let
\begin{eqnarray*}\Lambda_{ij}&=&\{\lambda_{ij}(\gamma)>0|\gamma\in \Gamma_{P_i}\bs\Gamma/\Gamma_{N_j}\}\\
&=&\{\lambda_{ij}(0)<\lambda_{ij}(1)<\lambda_{ij}(2)<\cdots\},
\end{eqnarray*}
and $a_{ij}(n)=\#\{\gamma\in \Gamma_{P_i}\bs\Gamma/\Gamma_{N_j}|\lambda_{ij}(\gamma)=\lambda_{ij}(n)\}$.
Note that the fact that the cusps are distinct implies that when $i\neq j$ we have that $\lambda_{ij}(\gamma)>0$ for all $\gamma\in \Gamma$ and the first sum in
\eqref{e:explicit2} vanishes.
\end{proof}


\def\cprime{$'$} \def\cprime{$'$}


\end{document}